\numberwithin{equation}{section}
\newtheorem{theorem}{Theorem}[section]
\newtheorem{proposition}[theorem]{Proposition}
\newtheorem{lemma}[theorem]{Lemma}
\newtheorem{remark}[theorem]{Remark}
\newcommand{\R}{{\mathbb R}}
\newcommand{\Rn}{{\mathbb R}^N}
\newcommand{\Sn}{{\mathbb S}^N}
\def\grad{\nabla}
\def\bye{\end{document}}
\def\by{\end{proof}\bye}
\def\beq{\begin{equation}}
\def\eeq{\end{equation}}
\newcommand{\Pmk}{\mathcal{P}^-_{k}}
\newcommand{\Ppk}{\mathcal{P}^+_{k}}
\newcommand{\Ppmk}{\mathcal{P}^{\pm}_{k}}
\title[\textbf{Liouville theorems}]{\textbf{Liouville theorems for a family of very degenerate elliptic non linear operators.}}
\author{Isabeau Birindelli, Giulio Galise, Fabiana Leoni}
\date{}
\address{Dipartimento di Matematica\newline
\indent Sapienza Universit\`a  di Roma \newline
 \indent   P.le Aldo  Moro 2, I--00185 Roma, Italy.}
 \email{isabeau@mat.uniroma1.it}
 \email{galise@mat.uniroma1.it}
\email{leoni@mat.uniroma1.it}
\begin{document}

\keywords{Fully nonlinear degenerate elliptic equations, viscosity solutions, Liouville type results}
\subjclass[2010]{35J60, 35J70, 35B53}
\begin{abstract} We prove nonexistence results of Liouville type for nonnegative viscosity solutions of some equations involving the fully nonlinear degenerate elliptic  operators $\Ppmk$, defined respectively as the sum of the largest and the smallest $k$ eigenvalues of the Hessian matrix. For the operator $\Ppk$ we obtain results analogous to those which hold for the Laplace operator in space dimension $k$. Whereas, owing to the stronger degeneracy of the operator $\Pmk$, we get totally different results.
 \end{abstract}

\maketitle
\section{Introduction} We  study the existence of nonnegative viscosity either solutions or supersolutions of fully nonlinear elliptic equations of the form
\begin{equation}\label{eq0}
F(D^2u(x))=0\quad\text{in}\quad\mathbb R^N\, ,
\end{equation}
or
\begin{equation}\label{eq1}
F(D^2u(x))+u^p(x)=0\quad\text{in}\quad\mathbb R^N,
\end{equation}
or 
\begin{equation}\label{eq2}
F(D^2u(x))+u^p(x)=0\quad\text{in}\quad\mathbb R^N_+,
\end{equation}
where $p>0$, $\mathbb R^N_+$ is any halfspace and $F$ is either $\Pmk$ or $\Ppk$. 
Here $\Pmk$ and $\Ppk$ are second order degenerate elliptic operators defined, for a positive integer $k\in[1,N]$ and any symmetric $N\times N$ matrix $X$, by the  partial sums
\begin{equation}\label{Pk}
\Pmk(X)=\sum_{i=1}^k\lambda_i(X)\quad \mbox{and}\quad  \Ppk(X)=\sum_{i=N-k+1}^N\lambda_i(X)
\end{equation}
 of the ordered eigenvalues $\lambda_1(X)\leq\cdots\leq \lambda_N(X)$  of $X$.  

When $k=N$ these operators coincide with the Laplacian. In this  case,  for equation \eqref{eq0} the results go back to the classical ones of  Cauchy 
and Liouville, whereas, for equations \eqref{eq1} and \eqref{eq2} where the reaction term $u^p$ is included, they have been started respectively by Gidas and Spruck in their acclaimed work  \cite{gs1} and by Berestycki, Capuzzo Dolcetta and Nirenberg in \cite{BCDN} and by Bandle and Levine in \cite{BL}.

In the present paper we focus on the degenerate cases $k<N$.  We will see that 
the existence or lack of existence is quite different depending on 
whether one considers $\Pmk$ or $\Ppk$. 
We collect our main results  in the following statements and then we will give some comments. 

The first theorem concerns equation \eqref{eq0}. 
\begin{theorem}\label{liou}Let $k<N$ be a positive integer.
\begin{enumerate}
\item For $F\equiv\Pmk$, there exist nonnegative classical solutions of \eqref{eq0} which are not constants.
\item\label{5} For $F\equiv\Ppk$ if $u$ is a nonnegative viscosity supersolution of \eqref{eq0} and $k=1$ or $k=2$, then $u$ 
is constant. If $k>2$ there are nonnegative classical supersolutions of \eqref{eq0} which are not constants.
\end{enumerate}
\end{theorem}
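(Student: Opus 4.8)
The plan is to dispose of the four assertions one at a time, the two existence statements being of a different flavour from the two nonexistence ones.

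\emph{Item (1): nonconstant solutions for $\Pmk$.} The point is that $\Pmk$ annihilates convex functions whose Hessian has low rank. Take $u(x)=e^{x_N}$; then $D^2u(x)=e^{x_N}\,e_N\otimes e_N$ has the eigenvalue $0$ with multiplicity $N-1$ and the simple eigenvalue $e^{x_N}>0$, so that, since $k\le N-1$, the $k$ smallest eigenvalues vanish and $\Pmk(D^2u)\equiv 0$ in $\Rn$. This $u$ is a smooth, positive, nonconstant classical solution of \eqref{eq0}. (Equally one may take $u(x)=x_1^2+\dots+x_{N-k}^2$, or any nonconstant convex function with $\operatorname{rank}D^2u\le N-k$.)

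\emph{Item (2), $k=1$ or $k=2$: constancy of supersolutions for $\Ppk$.} For $k=1$ we have $\Ppo=\lambda_N$, so a viscosity supersolution of \eqref{eq0} satisfies $D^2u\le 0$ in the viscosity sense, i.e. $u$ is concave; a nonnegative concave function on $\Rn$ is constant (restrict to lines). For $k=2$ the mechanism is a restriction principle combined with the planar Liouville property for superharmonic functions. First I would prove the lemma: if $u\ge 0$ is a viscosity supersolution of $\Ppk(D^2u)=0$ in $\Rn$ and $P$ is any affine $2$-plane, then $u|_P$, read as a function on $\mathbb{R}^2$, is a viscosity supersolution of $\Delta v=0$. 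This is the standard extension argument: given $C^2$ data $\psi$ on $\mathbb{R}^2$ touching $u|_P$ strictly from below at $x_0\in P$, write points of $\Rn$ as $(y,z)$ with $y$ along $P$ and $z$ transverse, and set $\Psi(y,z)=\psi(y)-C|z|^2$ with $C$ large; by lower semicontinuity of $u$, $u-\Psi$ attains a minimum over a small closed ball around $x_0$ at an interior point $\bar x=(\bar y,\bar z)$, with $\bar x\to x_0$ as the radius shrinks. At $\bar x$, $D^2\Psi$ is block diagonal with transverse block $-2C\,I$, so negative that the two largest eigenvalues of $D^2\Psi(\bar x)$ are exactly those of $D^2\psi(\bar y)$; hence $\Ppk(D^2\Psi(\bar x))=\operatorname{tr}D^2\psi(\bar y)=\Delta\psi(\bar y)\le 0$, and in the limit $\Delta\psi(x_0)\le 0$. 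Granted the lemma, $u|_P$ is superharmonic and nonnegative on $\mathbb{R}^2$, hence constant by the classical Liouville theorem for bounded-below superharmonic functions in the plane (equivalently, the recurrence of two-dimensional Brownian motion); since this holds for every $2$-plane through every point, $u$ is constant. I expect the main difficulty to be exactly here: making the restriction lemma rigorous in the merely lower semicontinuous viscosity framework, and reducing cleanly to the planar superharmonic Liouville property, whose failure in dimension $\ge 3$ is precisely what produces the examples below.

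\emph{Item (2), $k>2$: nonconstant supersolutions for $\Ppk$.} The idea is to import a radial superharmonic function from dimension $k$. Let $u(x)=\bigl(1+|x|^2\bigr)^{-(k-2)/2}=:\phi(|x|)$, which is smooth, positive, nonconstant and vanishes at infinity. For such a radial function $D^2u$ has the eigenvalue $\phi''(r)$ (radial direction, simple) and $\phi'(r)/r$ (tangential directions, multiplicity $N-1$). A direct computation gives $\phi'(r)\le 0$ and $\phi''(r)-\phi'(r)/r\ge 0$, so, since $N-1\ge k$, the $k$ largest eigenvalues of $D^2u$ are $\phi''(r)$ together with $k-1$ copies of $\phi'(r)/r$, whence
\[
\Ppk(D^2u)=\phi''(r)+(k-1)\frac{\phi'(r)}{r}=-k(k-2)\bigl(1+|x|^2\bigr)^{-(k+2)/2}<0 ,
\]
the middle expression being the Laplacian in $\mathbb{R}^k$ of the radial profile $\phi$, negative because $\phi(|y|)$ is a multiple of the Aubin–Talenti extremal and hence strictly superharmonic in $\mathbb{R}^k$. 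Thus $u$ is a nonnegative nonconstant classical supersolution of \eqref{eq0}, which completes the proof.
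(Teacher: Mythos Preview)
Your proof is correct and, for the Liouville part ($k\le 2$), takes a genuinely different route from the paper.

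For item~(1) and for the $k>2$ half of item~(2), you and the paper do the same thing: exhibit explicit functions. Your choices ($e^{x_N}$ and $(1+|x|^2)^{-(k-2)/2}$) differ cosmetically from the paper's (a one-variable convex function, respectively a piecewise radial function gluing a polynomial to $|x|^{2-k}$), but the verifications are identical in spirit.

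For the Liouville statement with $k=1,2$ the paper proceeds via radial barriers: it writes down the fundamental solutions \eqref{fond_sol} of $\Ppk$, proves a Hadamard three-circles inequality for $m(r)=\min_{\overline B_r}u$ by comparison, sends the outer radius to infinity (the fundamental solutions diverge to $-\infty$ precisely when $k\le 2$), and concludes $m(r)\equiv u(0)$, whence $u$ is constant by the strong minimum principle for $\Ppk$. This machinery is not disposable: the monotonicity of $r\mapsto m(r)r^{k-2}$ that falls out of it is reused in the proofs of Theorems~\ref{P2} and~\ref{Pk+ halfspaces}.

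Your route---restriction to affine $k$-planes plus the classical Liouville property for nonnegative superharmonic functions on $\R^k$---is more direct for the bare statement and avoids the strong minimum principle altogether. The penalized-extension argument for the restriction lemma is sound: from $u\ge 0$ one gets $C|\bar z_C|^2\le\max\psi$, forcing $\bar z_C\to 0$; then lower semicontinuity of $u$ together with strict touching on $P$ forces $\bar y_C\to y_0$, so the minimizer is eventually interior and one may test. (Your phrase ``as the radius shrinks'' is slightly misleading: the relevant limit is $C\to\infty$ with the ball fixed, not the other way round.) Note also that the same lemma, applied with $k=1$, rigorously justifies the implication ``$\lambda_N(D^2u)\le 0$ in the viscosity sense $\Rightarrow u$ concave'', which you stated without proof. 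What you lose relative to the paper is the quantitative three-circles estimate needed later; what you gain is a self-contained argument that makes the reduction to $\Delta$ in dimension $k$ completely transparent.
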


For equation \eqref{eq1}, we separate the results concerning $F\equiv\Pmk$ and $F\equiv \Ppk$.

\begin{theorem}\label{P1}
Let $k<N$ be a positive integer and let $F\equiv\Pmk$.
\begin{enumerate}
\item\label{2} For any $p>0$ there exist nonnegative viscosity solutions $u\not\equiv0$ of \eqref{eq1}.
\item\label{3} For any $p\geq1$ there exist positive classical solutions  of \eqref{eq1}.
\item\label{4} For $p<1$ there are no positive viscosity supersolutions of \eqref{eq1}.


\end{enumerate}
\end{theorem}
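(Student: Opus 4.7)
The plan is to handle the three parts with explicit constructions for (1) and (2) and a comparison argument for (3), each exploiting the extreme degeneracy of $\Pmk$ (which allows solutions depending on very few variables).

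For part (1) I would look for solutions of the form $u(x)=w(x_1)$. The Hessian then has a single possibly nonzero eigenvalue $w''(x_1)$ and $N-1$ zero eigenvalues; whenever $w''\le0$ this is the smallest, so $\Pmk(D^2u)=w''$. The equation reduces to the ODE $w''+w^p=0$, and a standard energy argument yields a ground-state profile $w\ge0$ on some $[-T,T]$ with $w(\pm T)=0$, which I extend by zero outside. Viscosity verification at $x_1=\pm T$ uses the fact that $w$ hits zero with nonzero slope: any $C^2$ test function touching $u$ from above there would have to be simultaneously nonnegative on one side and dominate the linear ramp $|w'(\pm T)||x_1\mp T|$ on the other, contradicting $C^2$ regularity, so the subsolution condition is vacuous; for the supersolution condition, $\phi\le u=0$ on the tangent hyperplane forces (via the min-max characterization) the smallest $N-1$ eigenvalues of $D^2\phi$ to be non-positive, and with $k<N$ this gives $\Pmk(D^2\phi)\le 0=-u^p$.

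For part (2), with $k<N$, I exploit the degeneracy by taking $u(x)=f(|y|)$ radial in $y=(x_1,\dots,x_{k+1})\in\R^{k+1}$. The eigenvalues of $D^2u$ are $f''(r)$ once, $f'(r)/r$ with multiplicity $k$, and $0$ with multiplicity $N-k-1$. A direct computation gives $f''-f'/r=\frac{pr^2}{k^2}f^{2p-1}>0$, so for $f$ positive and decreasing the smallest $k$ eigenvalues are the $k$ copies of $f'/r$, yielding $\Pmk(D^2u)=kf'/r$. The equation becomes the ODE $f'=-\frac{r}{k}f^p$, whose positive global solutions are
\[
f(r)=C\exp\!\left(-\tfrac{r^2}{2k}\right)\ \text{for}\ p=1,\qquad f(r)=\left(1+\tfrac{(p-1)r^2}{2k}\right)^{-1/(p-1)}\ \text{for}\ p>1,
\]
both smooth and strictly positive on $\R^N$.

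For part (3), with $0<p<1$, the same ODE admits the compactly supported profile $v_m(x):=\left(m^{1-p}-\tfrac{1-p}{2k}|x|^2\right)_+^{1/(1-p)}$ (now radial in $\R^N$; the analogous eigenvalue computation again gives $\Pmk(D^2v_m)=-v_m^p$ inside the support), with $v_m(0)=m$ and $R(m):=\sqrt{2km^{1-p}/(1-p)}\to\infty$ as $m\to\infty$. A boundary check at $\partial B_{R(m)}$ analogous to (1) shows the zero extension is a global viscosity solution. Assume for contradiction that $u>0$ is a positive viscosity supersolution; fix $x_0$ and $m>0$, and apply a comparison principle for $F(X,r)=\Pmk(X)+r^p$ on $B_{R(m)}(x_0)$ between the subsolution $v_m(\cdot-x_0)$ and the supersolution $u$. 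Since $v_m(\cdot-x_0)=0\le u$ on $\partial B_{R(m)}(x_0)$, comparison yields $v_m(\cdot-x_0)\le u$ inside, so $u(x_0)\ge v_m(0)=m$; letting $m\to\infty$ gives $u(x_0)=+\infty$, a contradiction. The main obstacle is this comparison step: $\Pmk$ is very degenerate and $r\mapsto r^p$ is only H\"older-continuous at $r=0$, so one must invoke (or verify) a Crandall--Ishii--Lions-type comparison principle adapted to $\Pmk$. Since both $u$ and $v_m$ remain strictly positive in the interior of $B_{R(m)}(x_0)$, where $r\mapsto r^p$ is smooth and strictly increasing, the standard argument should go through, but this verification is the nontrivial technical ingredient.
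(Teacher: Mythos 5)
Your constructions for parts (\ref{2}) and (\ref{3}) are correct, though they differ from the paper's. For (\ref{2}) the paper uses the fully radial profile $u(x)=\bigl[\tfrac{1-p}{2k}(R^2-|x|^2)\bigr]^{1/(1-p)}$ cut off at $|x|=R$ and checks the boundary sphere by the same Courant--Fischer argument you use; your $1$-dimensional profile $w(x_1)$ is a cleaner variant that works directly for all $p>0$. For (\ref{3}) the paper works with radial functions of $|x|$ in $\R^N$ and invokes Lemma \ref{rem1} (the inequality $u''\geq u'/r$ built into that lemma ensures the ordering of eigenvalues), whereas you take $f(|y|)$ with $y\in\R^{k+1}$; both arrive at the same ODE $kf'/r=-f^p$, and in fact the same closed-form solutions up to a change of radial variable.

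Part (\ref{4}), however, contains a genuine gap which you have flagged but underestimated. You propose to compare the compactly supported subsolution $v_m(\cdot-x_0)$ against the supersolution $u$ on $B_{R(m)}(x_0)$ using a Crandall--Ishii--Lions-type comparison principle for $\Pmk(D^2w)+w^p=0$. The problem is not the mere H\"older regularity of $r\mapsto r^p$ at the origin: it is that the zero-order term has the \emph{wrong sign of monotonicity}. Writing the equation as $F(r,X):=-\Pmk(X)-r^p=0$, one has $\partial_r F=-p\,r^{p-1}<0$, so $F$ is not proper in the sense of \cite{CIL}, and the doubling-of-variables machinery breaks down at the crucial step: at a candidate interior maximum of $v_m-u$ (say at $\hat x$, using $v_m(\cdot-x_0)-\max(v_m-u)$ as a test function touching $u$ from below), the supersolution property gives $u^p(\hat x)\leq-\Pmk(D^2v_m(\hat x-x_0))=v_m^p(\hat x-x_0)$, i.e. $u(\hat x)\leq v_m(\hat x-x_0)$, which is exactly \emph{consistent} with a positive interior maximum and yields no contradiction. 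Comparison for sublinear semilinear problems of this type does hold, but via sliding/homogeneity arguments in the spirit of Brezis--Oswald, not by the ``standard'' viscosity comparison. The paper sidesteps the whole issue by the Brezis--Kamin change of unknown $v=\frac{1}{1-p}u^{1-p}$, which removes the zero-order term: one gets $\Pmk(D^2v)\leq-1$, an inequality with no $v$-dependence, so comparison with the explicit quadratic $w_R(x)=\frac{R^2-|x|^2}{2k}$ on $B_R$ is elementary, and letting $R\to\infty$ gives the contradiction. If you want to keep your compactly supported subsolutions, you would first need to supply a comparison principle for the non-proper equation, which is a separate nontrivial lemma; as written, the step does not follow from the references you invoke.
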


 We  observe that any nonnegative supersolution of \eqref{eq1} is in turn a supersolution of \eqref{eq0}. Hence, the Liouville property for equation \eqref{eq1} with $F\equiv\Ppk$   directly follows from Theorem \ref{liou}-\eqref{5} in the cases $k=1$ and $k=2$. For the remaining cases we have the following result.

\begin{theorem}\label{P2}Let $2<k<N$ be a positive integer and let $F\equiv\Ppk$ .
\begin{enumerate}
\item\label{6} For $0<p\leq \frac{k}{k-2}$ the only nonnegative viscosity supersolution of \eqref{eq1} is $u\equiv0$.
\item\label{7} For  $p>\frac{k}{k-2}$ there exist  positive classical supersolutions  of \eqref{eq1}.
\item\label{8} For $p\geq\frac{k+2}{k-2}$ there exist positive classical solutions of \eqref{eq1}.
\item\label{9} For $p\in(\frac{k}{k-2},\frac{k+2}{k-2})$ there are no radial positive classical solutions of \eqref{eq1}.
\end{enumerate}
\end{theorem}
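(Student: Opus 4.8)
Throughout, the mechanism behind all four statements is that $\Ppk$ behaves like the Laplacian in $k$ variables on functions that are radial in all $N$ variables. If $u(x)=g(|x|)$, then $D^2u$ has eigenvalues $g''(r)$, simple, and $g'(r)/r$, with multiplicity $N-1\ge k$; hence $\Ppk(D^2u)=g''+\frac{k-1}{r}g'$ whenever $g''\ge g'/r$, while $\Ppk(D^2u)=k\,g'/r\le g''+\frac{k-1}{r}g'$ in the opposite regime. I would record this elementary fact first, together with its viscosity counterpart: if $u\ge0$ is a viscosity supersolution of \eqref{eq1}, then $m(r):=\min_{|x|=r}u(x)$ is a viscosity supersolution of $m''+\frac{k-1}{r}m'+m^p=0$ on $(0,\infty)$. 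Indeed, if $\psi\in C^2$ touches $m$ from below at $r_0>0$ and $x_0\in\partial B_{r_0}$ attains the minimum, then $x\mapsto\psi(|x|)$ touches $u$ from below at $x_0$ (because $u(x)\ge m(|x|)\ge\psi(|x|)$ near $x_0$); testing \eqref{eq1} at $x_0$ and splitting into the two regimes above gives the inequality for $m$ in both cases. In particular $m''+\frac{k-1}{r}m'\le0$, so $m$ is nonincreasing and bounded, and extends to a bounded radial supersolution of $-\Delta v\ge v^p$ on all of $\R^k$.

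For \eqref{6}, by the strong minimum principle for $\Ppk$ — which holds via the standard barrier $e^{-\alpha|x-y|^2}$, a subsolution of $\Ppk$ on annuli $\{|x-y|>\sqrt{k/2\alpha}\}$ — we may assume $u>0$, so the $m$ above is a bounded, positive, nonincreasing radial supersolution of $-\Delta v\ge v^p$ in $\R^k$. For $p\le\frac{k}{k-2}$ such a supersolution must vanish identically: this is the classical Serrin-type Liouville property in dimension $k$ (cf. \cite{BCDN}), whose radial proof runs by writing $(r^{k-1}m')'\le -r^{k-1}m^p$, integrating twice and using $\lim_{r\to\infty}m=0$ to obtain $m(r)\ge c\,r^{2-k}\int_0^r s^{k-1}m(s)^p\,ds$, and then bootstrapping the resulting decay rate — the iteration closes in favour of $m\equiv0$ precisely when $(k-2)(p-1)\le2$, with a logarithmic refinement at the borderline. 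Hence $u\equiv0$.

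Parts \eqref{7} and \eqref{8} I would prove by producing explicit radial competitors $w(x)=h(|x|)$ with $h''\ge h'/r$, so that $\Ppk(D^2w)=h''+\frac{k-1}{r}h'$ reduces matters to the $k$-dimensional setting. For \eqref{7}, take $h(r)=\varepsilon(1+r^2)^{-\beta}$ with $\frac1{p-1}\le\beta<\frac{k-2}{2}$ — a nonempty range exactly because $p>\frac{k}{k-2}$; a direct computation gives $h''\ge h'/r$ and $\Ppk(D^2w)=-2\beta\varepsilon(1+r^2)^{-\beta-2}(k+(k-2-2\beta)r^2)<0$, whose modulus dominates $w^p=\varepsilon^p(1+r^2)^{-\beta p}$ up to a positive constant, so for $\varepsilon$ small enough $w$ is a positive classical supersolution. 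For \eqref{8}, I would import a positive entire radial solution $v$ of $-\Delta v=v^p$ in $\R^k$, which exists precisely for $p\ge\frac{k+2}{k-2}$ (the Aubin--Talenti profile $v(r)=(c_k/(1+r^2))^{(k-2)/2}$ at the critical exponent, slowly decaying solutions above it). Such a $v$ is radially decreasing, and from $r^{k-1}v'(r)=-\int_0^r s^{k-1}v^p\le-\frac{r^k}{k}v(r)^p$ one gets $v'/r\le-v^p/k$, hence $v''-v'/r=-\frac{k}{r}v'-v^p\ge0$; therefore $u(x):=v(|x|)$ satisfies $\Ppk(D^2u)=v''+\frac{k-1}{r}v'=-u^p$ on $\Rn$, as required.

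Finally, for \eqref{9}, let $u(x)=g(|x|)$ be a radial positive classical solution on $\Rn$. The crucial point is that necessarily $g''\ge g'/r$ on $(0,\infty)$: on the open set where $g''<g'/r$, equation \eqref{eq1} reads $k\,g'/r+g^p=0$, i.e. $g'=-\frac1k rg^p$, and differentiating this identity gives $g''-g'/r=\frac{p}{k^2}r^2g^{2p-1}>0$, a contradiction. Hence $g$ solves $g''+\frac{k-1}{r}g'+g^p=0$ with $g(0)>0$, $g'(0)=0$, so $v(y):=g(|y|)$ is a positive entire solution of $-\Delta v=v^p$ in $\R^k$; since $k>2$ forces $\frac{k}{k-2}>1$, the range $p\in(\frac{k}{k-2},\frac{k+2}{k-2})$ is subcritical in dimension $k$, and Gidas--Spruck \cite{gs1} (or, for radial solutions, the Pohozaev identity on balls together with the decay estimates) rules it out. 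The two steps I expect to require the most care are the rigorous passage from \eqref{eq1} to the one-dimensional inequality for $m$ in part \eqref{6}, where one must keep track of the two eigenvalue regimes of $D^2u$ through the comparison and control $m$ near the origin, and this self-improvement argument in part \eqref{9} showing that a radial classical solution cannot live in the degenerate regime $g''<g'/r$.
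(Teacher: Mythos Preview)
Your proof is correct. For parts \eqref{7}, \eqref{8} and \eqref{9} it follows essentially the same line as the paper: the same radial competitors $(\mu+r^2)^{-\beta}$ for supersolutions, the same import of entire radial solutions from $\R^k$ together with the verification $v''\ge v'/r$ (your integral argument $r^{k-1}v'\le -r^kv^p/k$ is in fact slightly more direct than the paper's, which shows instead that $r^k(v''-v'/r)$ is nondecreasing), and the same contradiction in \eqref{9} obtained by differentiating the identity $g'=-rg^p/k$ on the set where $g''<g'/r$.

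For part \eqref{6} the two arguments diverge somewhat. You first reduce the problem by showing that $m(r)=\min_{\partial B_r}u$ is a one-dimensional viscosity supersolution of $m''+(k-1)m'/r+m^p\le0$, so that $v(y)=m(|y|)$ is a nonnegative radial viscosity supersolution of $-\Delta v\ge v^p$ in $\R^k$, and then invoke the classical Serrin-type Liouville theorem in dimension $k$. The paper instead works directly in $\R^N$: it establishes a Hadamard three-circles inequality for $\Ppk$ yielding the monotonicity of $r\mapsto m(r)r^{k-2}$, then tests the equation against the explicit function $\varphi(|x|)=m(r)\bigl(1-[(|x|-r)^+]^3/r^3\bigr)$ to get $m(2r)^p\le Cm(2r)/r^2$, which closes the argument for $p<k/(k-2)$; at the borderline $p=k/(k-2)$ it compares $u$ with $\alpha|x|^{2-k}\log|x|+\beta$ on annuli to contradict the bound $m(r)r^{k-2}\le C$. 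Your route is conceptually cleaner and makes the analogy with $\Delta$ in $\R^k$ transparent; the paper's is self-contained and sidesteps the mild issue that your integration-by-parts sketch for the Serrin result does not literally apply to a merely lower semicontinuous viscosity supersolution $m$ --- making that step rigorous amounts to carrying out, in one variable, exactly the test-function computations the paper performs directly in $\R^N$.
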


As far as solutions in the halfspace are concerned, our results read as follows.\\ Without loss of generality, henceforth we set
$$
\mathbb R_+^N=\left\{(x',x_N)\in\Rn:x_N>0\right\}\, .
$$

\begin{theorem} \label{10} Let $k<N$ be a positive integer. For any $p>0$ there exist nonnegative  bounded viscosity solutions of
\begin{equation}\label{halfspace}
\Pmk(D^2u(x))+u^p(x)=0\quad\mbox{in } \mathbb R_+^N\, ,\quad u=0\quad\mbox{on }\partial\mathbb R^N_+
\end{equation}
and such that 
\begin{equation}\label{limsup}
\limsup_{x_N\to+\infty}u(x',x_N)>0\quad\text{uniformly w.r.t. $x'\in\mathbb R^{N-1}$\, .}
\end{equation}
\end{theorem}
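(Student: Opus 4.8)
The plan is to construct an explicit solution depending only on the variable $x_N$, exploiting the fact that $\Pmk$ with $k<N$ is insensitive to the many vanishing eigenvalues produced by such one-dimensional profiles. First I would analyse the ODE $\phi''=-\phi^p$ with initial data $\phi(0)=0$, $\phi'(0)=1$: multiplying by $\phi'$ shows that the energy $\tfrac12(\phi')^2+\tfrac1{p+1}\phi^{p+1}$ is conserved and equal to $\tfrac12$, so $\phi$ increases from $0$ up to the value $M=\big(\tfrac{p+1}2\big)^{1/(p+1)}$, attained in finite time $T$ at a point where $\phi'=0$, and then, by the reflection symmetry of the equation, decreases back to $0$ at time $2T$, with $\phi'(2T)=-1$ and $\phi>0$ on $(0,2T)$. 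I would then extend $\phi$ to $[0,+\infty)$ by $2T$-periodicity. The extended $\phi$ is smooth on each interval $(2mT,2(m+1)T)$, nonnegative, bounded by $M$, and at each point $2mT$ it has an upward corner with one-sided slopes $\pm1$.

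Set $u(x)=\phi(x_N)$. Away from the hyperplanes $\{x_N=2mT\}$ the matrix $D^2u$ has the eigenvalue $\phi''(x_N)=-\phi(x_N)^p\le0$ and $N-1$ zero eigenvalues, so its $k$ smallest eigenvalues are $\phi'',0,\dots,0$ and $\Pmk(D^2u)+u^p=\phi''+\phi^p=0$ in the classical sense. At an interior corner point $x^0$, with $x^0_N=2mT$ for $m\ge1$ and hence $u(x^0)=0$, I would check the two viscosity inequalities separately. If $\psi\in C^2$ touches $u$ from below at $x^0$, then since $u\equiv0\ge\psi$ on $\{x_N=2mT\}$ with equality at $x^0$, the quadratic form associated with $D^2\psi(x^0)$ is nonpositive on the $(N-1)$-dimensional subspace $\{\xi_N=0\}$; by the min-max principle $\lambda_{N-1}(D^2\psi(x^0))\le0$, hence $\lambda_1(D^2\psi(x^0)),\dots,\lambda_k(D^2\psi(x^0))\le0$ because $k\le N-1$, so $\Pmk(D^2\psi(x^0))+u(x^0)^p\le0$. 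Conversely, no $C^2$ function can touch $u$ from above at $x^0$: along the normal line $t\mapsto u(x^0+te_N)=\phi(2mT+t)$ the function grows like $|t|$ near $t=0$, so a smooth function lying above it and equal to $0$ at $x^0$ would have to grow at least linearly away from its critical point $x^0$, which is impossible; the subsolution condition at $x^0$ is therefore vacuous. Hence $u$ is a bounded nonnegative viscosity solution of \eqref{halfspace}, it vanishes on $\partial\mathbb R^N_+$ by construction, and it fulfils \eqref{limsup} since $\limsup_{x_N\to+\infty}\phi(x_N)=M>0$ independently of $x'$.

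The only step demanding genuine care is the supersolution inequality at the corner points, namely the min-max argument bounding the first $N-1$ eigenvalues of a test Hessian by the sign of its tangential part; the rest is elementary ODE bookkeeping. I would also remark that for $0<p<1$ the map $s\mapsto s^p$ is not Lipschitz at $0$, so uniqueness for the ODE may fail, but this plays no role: a solution with the stated properties exists, and $\phi$ is of class $C^2$ because $\phi''=-\phi^p$ with $\phi$ continuous.
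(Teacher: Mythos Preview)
Your proof is correct and follows essentially the same approach as the paper: both construct a one-dimensional periodic profile $\phi$ solving $\phi''+\phi^p=0$ with $\phi(0)=0$, set $u(x)=\phi(x_N)$, verify the equation classically away from the zero-hyperplanes, and handle the corner points via the Courant--Fischer argument for the supersolution inequality together with the observation that no $C^2$ function can touch the upward corner from above. The only differences are cosmetic: the paper allows a general initial slope $\alpha>0$ (and remarks on the resulting scaling freedom), while you fix $\alpha=1$ and supply a bit more detail on the energy-conservation step and the non-Lipschitz issue for $p<1$.
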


\begin{theorem}\label{Pk+ halfspaces}Let $k<N$ be a positive integer. If $k=1$ and $p>0$ or $k>1$ and $p<\frac{k+1}{k-1}$, then there does not exist positive viscosity supersolutions of the equation
\begin{equation*}
\Ppk(D^2u(x))+u^p(x)=0\quad\mbox{in}\quad\mathbb R_+^N.
\end{equation*}
\end{theorem}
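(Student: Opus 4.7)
The plan is to reduce the problem to the classical Liouville theorem for the Laplacian in $\R^k$ by restricting $u$ to a $k$-dimensional affine subspace that lies entirely in the halfspace. The key tool is Ky Fan's variational characterization
\[
\Ppk(X)=\max_{V\subset\Rn,\ \dim V=k}\operatorname{tr}(P_VXP_V),
\]
which yields the pointwise inequality $\Ppk(X)\ge\sum_{i=1}^kX_{ii}$ once we take $V_0=\operatorname{span}(e_1,\dots,e_k)$, the coordinate $k$-plane parallel to $\pl\R^N_+$. If $u$ is a positive viscosity supersolution of $\Ppk(D^2u)+u^p=0$ in $\R^N_+$, the above bound says (in the viscosity sense) that the partial Laplacian $\sum_{i=1}^ku_{x_ix_i}$ is dominated by $-u^p$.

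Next I would fix any $z^0=(z^0_{k+1},\dots,z^0_N)\in\R^{N-k}$ with $z^0_N>0$, so that the affine slice $S_{z^0}=\{(y,z^0):y\in\R^k\}$ is entirely contained in $\R^N_+$, and set $w(y):=u(y,z^0)$; this is a positive function defined on all of $\R^k$. The main claim is that $w$ is a viscosity supersolution of $-\Delta w\ge w^p$ in $\R^k$. Given any $\psi\in C^2(\R^k)$ strictly touching $w$ from below at $y_0$, I lift it to the test function
\[
\Psi(x):=\psi(x_1,\dots,x_k)+q\cdot(z-z^0)-M|z-z^0|^2,\qquad z=(x_{k+1},\dots,x_N),
\]
picking $q\in\R^{N-k}$ to absorb the first-order behavior of $u$ in the normal directions and $M>0$ large to enforce $\Psi\le u$ near $(y_0,z^0)$. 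The Hessian of $\Psi$ at the contact point is block-diagonal with blocks $D^2\psi(y_0)$ and $-2MI_{N-k}$; for $M$ large, the top $k$ eigenvalues coincide with those of $D^2\psi(y_0)$, whence $\Ppk(D^2\Psi(y_0,z^0))=\Delta\psi(y_0)$. The supersolution inequality for $u$ at $(y_0,z^0)$ therefore reduces to $\Delta\psi(y_0)+\psi(y_0)^p\le0$, which is precisely the viscosity supersolution inequality for $w$ at $y_0$.

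Once the reduction is in place, the conclusion follows from the classical theory of the semilinear Laplace equation on $\R^k$. For $k=1$ and $k=2$ one uses that positive superharmonic functions on these spaces must be constant, so $w$ would reduce to a positive constant $c$ with $c^p\le0$, a contradiction. For $k\ge3$ the Serrin--Gidas--Spruck theorem rules out positive viscosity supersolutions of $-\Delta w\ge w^p$ whenever $0<p\le k/(k-2)$, and since $(k+1)/(k-1)<k/(k-2)$ for every such $k$, this covers the range $p<(k+1)/(k-1)$ of the statement. The main obstacle is the viscosity restriction step itself: one must verify that $\Psi$ truly lies below $u$ in a full neighborhood of $(y_0,z^0)$. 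If $u$ is continuous, a first-order matching choice of $q$ combined with large $M$ does the job routinely via the semi-jet characterization; for merely lower semicontinuous $u$, a preliminary inf-convolution regularization of $u$ is required before the restriction, after which one passes to the limit in the regularization parameter.
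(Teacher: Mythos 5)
Your approach is genuinely different from the paper's. The paper proves a Hadamard three circles estimate in the halfspace for $\mu(r)=\inf_{\overline B_r\cap\R^N_+}u/x_N$ (monotonicity of $r^k\mu(r)$), then plays that against a test-function estimate at scale $r$ to reach a contradiction for $p<\frac{k+1}{k-1}$. You instead restrict $u$ to a coordinate $k$-plane $\{z=z^0\}\subset\R^N_+$ parallel to the boundary and invoke the Liouville theorem for $\Delta$ in $\R^k$. That is a slicker route and, if the restriction step is made rigorous, it actually yields the sharper threshold $p\leq\frac{k}{k-2}$, which strictly contains $p<\frac{k+1}{k-1}$. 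Since positive supersolutions do exist on $\R^N$ (hence on $\R^N_+$) precisely for $p>\frac{k}{k-2}$ (Theorem \ref{P2}-(\ref{7})), that threshold would be optimal, so your observation is substantive.

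The one serious gap is the restriction step itself. The claim that you can pick a fixed $q$ and then $M$ large to force $\Psi(x)=\psi(y)+q\cdot(z-z^0)-M|z-z^0|^2\leq u(x)$ in a full neighborhood of $(y_0,z^0)$ is false for a merely LSC $u$: lower semicontinuity only prevents $u$ from jumping up at $(y_0,z^0)$, and no linear-in-$z$ term compensates an arbitrary downward oscillation in the normal directions. The fallback you mention — inf-convolving $u$ first — also does not work out of the box here, because $F(X,r)=\Ppk(X)+r^p$ is \emph{increasing} in $r$ (the equation is not proper), so the inf-convolution $u^\ep$ satisfies $\Ppk(D^2u^\ep(x))+u(y_x)^p\leq0$ with $u(y_x)\leq u^\ep(x)$, which does not give $\Ppk(D^2u^\ep)+(u^\ep)^p\leq0$.

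The correct way to obtain the restriction lemma is the standard penalization with $M\to\infty$ rather than a fixed $M$: minimize $\Phi_M(y,z)=u(y,z)-\psi(y)+M|z-z^0|^2$ on a compact neighborhood of $(y_0,z^0)$, obtain minimizers $(y_M,z_M)$ with $\Phi_M(y_M,z_M)\leq0$, deduce $|z_M-z^0|^2\leq C/M\to0$, and use lower semicontinuity and strict touching of $\psi$ to get $(y_M,z_M)\to(y_0,z^0)$. For $M$ large, $(y_M,z_M)$ is interior and $\psi(y)-M|z-z^0|^2+c_M$ is a valid test function at $(y_M,z_M)$; its Hessian is block-diagonal with blocks $D^2\psi(y_M)$ and $-2MI_{N-k}$, so for $M$ large $\Ppk$ of it equals $\Delta\psi(y_M)$. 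The supersolution inequality gives $\Delta\psi(y_M)+u(y_M,z_M)^p\leq0$. Finally, $u(y_M,z_M)\leq\psi(y_M)$ (since $\Phi_M(y_M,z_M)\leq0$) together with lower semicontinuity forces $u(y_M,z_M)\to u(y_0,z^0)=\psi(y_0)$, and passing to the limit yields $\Delta\psi(y_0)+\psi(y_0)^p\leq0$. With this repair your argument is complete, is genuinely different from the paper's, and gives a stronger conclusion.
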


Let us recall that the  operators $\mathcal{P}^\pm_{k}$ have been initially introduced in connection with Riemannian geometry. In 
particular, they have been considered  by Sha in 
 \cite{Sha} when studying  
 $k$ convex manifolds, while the case of manifolds with partially positive curvature was seen by Wu in \cite{Wu}.  They have been exhibited also in \cite[Example 1.8]{CIL}, as examples of fully nonlinear degenerate elliptic operators,
and they also appear in the level set approach to mean curvature flow of manifolds with arbitrary codimension developed  by Ambrosio and Soner  in \cite{AS}. More recently, in a PDE context,
we wish to recall the works of Harvey and Lawson  \cite{HL1, HL2} that have given a new geometric interpretation of 
solutions,  while Caffarelli, Li and Nirenberg in \cite{CLN1} in their study of degenerate elliptic equations, give 
some  results concerning removable singularities along smooth manifolds for Dirichlet problems associated to $\Pmk$. See also \cite{AGV,GV} for the extended version of the maximum principle and \cite{BGI} for regularity and existence of the principal eigenfunctions. We further recall that existence of entire sub/supersolutions of equations 
 involving   $\Ppmk$ and having different lower order terms have  been studied in \cite{CDLV1, CDLV2}.
 
A connection between the existence of solutions relative to $\mathcal{P}^\pm_{k}$ and the existence of solutions relative to Laplace operator in dimension $k$  may be expected by the definition of the operators $\mathcal{P}^\pm_{k}$ itself. As a general fact, we notice that if $v$ is a function of $k$ variables and we consider it as a function of $N>k$ variables just by setting $u(x_1,\ldots ,x_N)=v(x_1,\ldots ,x_k)$, then one has
 $$
\Ppk (D^2u)\geq \Delta v\geq \Pmk (D^2u)\, ,
$$
and $\Ppk (D^2u)= \Delta v$ if and only if $v$ is convex, as well as $\Pmk (D^2u)= \Delta v$ if and only if $v$ is concave. This remark does not lead to any immediate extension of existence/non existence results for Laplace operator in dimension $k$  to analogous results for operators $\mathcal{P}^\pm_{k}$. On the other hand, if $u$ is a function of $N$ variables satisfying $\Ppk(D^2u)\leq0$ (or $\Pmk(D^2u)\geq0$), then $\Delta u\leq0$ (respectively $\Delta u\geq0$). This implies that the nonexistence results relative to supersolutions of the Laplace operator immediately extend to nonexistence results for $\Ppk$. In the present paper we prove stronger results. The thresholds we found for $\Ppk$ are those that correspond to the Laplace operator in dimension $k$.  Indeed, we can adapt the techniques developed in \cite{CL,L} for fully nonlinear uniformly elliptic operators in $\R^k$ to get our results of Theorem \ref{liou}-(2), Theorem \ref{P2} and Theorem \ref{Pk+ halfspaces}, each time paying attention to the order of the eigenvalues of the Hessian matrix of the involved test functions.

\noindent  On the other hand, the statements of Theorem \ref{liou}-(1), Theorem \ref{P1} and Theorem \ref{10} for operator $\Pmk$ drastically differ from the analogous ones in the uniformly elliptic case, and they are obtained by using ad hoc constructions of  explicit solutions. The diversity of results for the two operators is not surprising if one takes into account the stronger degeneracy of  the  operator $\Pmk$ with respect to supersolutions, which causes for example the failure of the strong minimum principle, see e.g.  \cite{BGI}.

Let us conclude with a final remark. In Theorem \ref{P2} we excluded the existence of radial solutions of equation \eqref{eq1} for $p\in(\frac{k}{k-2},\frac{k+2}{k-2})$, but the question of existence of non radial solutions is left open. This is clearly related to  the more general question of radial symmetry of positive solutions for the equation
$$\Ppk(D^2u)+f(u)=0\quad (\mbox{or}\  \Pmk(D^2u)+f(u)=0)\quad \mbox{in }\ \R^N\, .$$
Let us mention that the usual proof for semilinear elliptic equations based on  the moving planes method,   highly relying on the
strong maximum principle, seems not to apply to the operator $\Ppk$.

\section{Preliminaries}
The operators $\Ppmk$ are elliptic second order operators which degenerate in any direction, i.e. for any $\xi\in\Rn$ such that $|\xi|=1$ then
\begin{equation}\label{degenerate}
\min_{X\in\Sn}\left(\Ppmk(X+\xi\otimes\xi)-\Ppmk(X)\right)=0.
\end{equation}
To prove \eqref{degenerate}  just take $X=0$ in the case of $\Pmk$ and  $X=-\xi\otimes\xi$ for $\Ppk$,  then use the fact that ${\rm spec}(\xi\otimes\xi)=\left\{0,\ldots,0,1\right\}$. They can be equivalently defined either by the partial sums \eqref{Pk} or by the representation formulas
\begin{equation*}
\begin{split}
\Pmk(X)&=\min\left\{\sum_{i=1}^k\left\langle X\xi_i,\xi_i\right\rangle\,|\,\text{$\xi_i\in\Rn$ and $\left\langle\xi_i,\xi_j\right\rangle=\delta_{ij}$, for $i,j=1,\ldots,k$}\right\}\\
\Ppk(X)&=\max\left\{\sum_{i=1}^k\left\langle X\xi_i,\xi_i\right\rangle\,|\,\text{$\xi_i\in\Rn$ and $\left\langle\xi_i,\xi_j\right\rangle=\delta_{ij}$, for $i,j=1,\ldots,k$}\right\},
\end{split}
\end{equation*}
see \cite[Lemma 8.1]{CLN1}. It is then easy to see the superadditivity/subadditivity properties 
\begin{equation}\label{suba}
\Pmk(Y)\leq{\mathcal P}^\pm_k(X+Y)-{\mathcal P}^\pm_k(X)\leq\Ppk(Y).
\end{equation}

\begin{theorem}[\textbf{Strong Minimum Principle}]\label{SMP} Let $\Omega\subset\Rn$ be a domain and let $u\in LSC(\Omega)$ be a viscosity supersolution of $\Ppk(D^2u)=0$ in $\Omega$. If $u$ achieves its minimum in the interior of $\Omega$, then $u$ is a constant.
\end{theorem}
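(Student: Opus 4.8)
The plan is to transcribe the classical Hopf boundary‑point argument into the viscosity framework, exploiting the very special form of the Hessian of the barrier. Write $m=\min_\Omega u$, which is attained by assumption, and set $\Omega_0=\{x\in\Omega:\ u(x)=m\}$. Since $u\in LSC(\Omega)$ and $m$ is the minimum, $\Omega_0=\{u\le m\}$ is nonempty and relatively closed in $\Omega$, so, $\Omega$ being connected, it suffices to show that $\Omega_0$ is also open. Arguing by contradiction, assume $(\partial\Omega_0)\cap\Omega\neq\emptyset$; then the usual interior‑ball construction produces $y\in\Omega$, $R>0$ and a point $x_0\in\partial B_R(y)\cap\Omega_0$ with $\overline{B_R(y)}\subset\Omega$, $B_R(y)\subset\Omega\setminus\Omega_0$ and $\overline{B_R(y)}\cap\Omega_0=\{x_0\}$. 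I will derive a contradiction from the existence of such a configuration.

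The heart of the matter is the barrier. On the annulus $A=B_R(y)\setminus\overline{B_{R/2}(y)}$ I would use $w(x)=e^{-\alpha|x-y|^2}-e^{-\alpha R^2}$ with $\alpha$ large, to be fixed. A direct computation gives
$$
D^2w(x)=2\alpha e^{-\alpha|x-y|^2}\bigl(2\alpha\,(x-y)\otimes(x-y)-I\bigr),
$$
whose eigenvalues are $\mu_1=2\alpha e^{-\alpha|x-y|^2}\bigl(2\alpha|x-y|^2-1\bigr)$ in the radial direction and $\mu_2=\dots=\mu_N=-2\alpha e^{-\alpha|x-y|^2}$. For $x\neq y$ one has $\mu_1-\mu_2=4\alpha^2|x-y|^2e^{-\alpha|x-y|^2}>0$, so $\mu_1$ is the largest eigenvalue and therefore
$$
\Ppk(D^2w(x))=\mu_1+(k-1)\mu_2=2\alpha e^{-\alpha|x-y|^2}\bigl(2\alpha|x-y|^2-k\bigr),
$$
which is strictly positive on $\overline A$ as soon as $\alpha>2k/R^2$. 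This is exactly the point at which the structure of $\Ppk$ enters: the single radial eigenvalue, of order $\alpha^2$, dominates the $k-1$ negative eigenvalues, which are only of order $\alpha$. I then fix such an $\alpha$.

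Next comes the comparison on $A$. On $\partial B_R(y)$ we have $w=0$ and $u\ge m$; on the compact set $\partial B_{R/2}(y)\subset\Omega\setminus\Omega_0$ the lower semicontinuous function $u$ attains a minimum equal to $m+c$ with $c>0$, so, choosing $\varepsilon>0$ with $\varepsilon\max_{\overline A}w\le c$, we get $u\ge m+\varepsilon w$ on $\partial A$. If $u-m-\varepsilon w$ had a strictly negative minimum at some interior point $z\in A$, then the $C^2$ function $m+\varepsilon w+(u-m-\varepsilon w)(z)$ would touch $u$ from below at $z$, and the definition of viscosity supersolution together with the positive $1$‑homogeneity of $\Ppk$ would force $\varepsilon\,\Ppk(D^2w(z))\le0$, contradicting the previous paragraph; hence $u\ge m+\varepsilon w$ throughout $\overline A$. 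Finally, since $w\le0$ outside $B_R(y)$ while $u\ge m$ everywhere, the $C^2$ function $\phi=m+\varepsilon w$ satisfies $\phi\le u$ in a full neighborhood of $x_0$ in $\Omega$, with $\phi(x_0)=m=u(x_0)$; thus $\phi$ touches $u$ from below at $x_0$, and the supersolution property yields $\varepsilon\,\Ppk(D^2w(x_0))\le0$. But $\Ppk(D^2w(x_0))=2\alpha e^{-\alpha R^2}(2\alpha R^2-k)>0$, a contradiction. Therefore $(\partial\Omega_0)\cap\Omega=\emptyset$, whence $\Omega_0=\Omega$ and $u\equiv m$.

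I do not expect a genuine analytic obstacle here: the scheme is the standard strong–maximum–principle argument, and every inequality is tested directly against the definition of viscosity supersolution rather than through a comparison theorem. The one delicate point is bookkeeping with the ordering of eigenvalues: one must make sure that along the barrier the Hessian has a single large positive (radial) eigenvalue and $N-1$ coinciding negative ones, so that $\Ppk$ of it has the sign of $2\alpha|x-y|^2-k$. Running the same computation for $\Pmk$ would instead produce the sum of $k$ negative eigenvalues, and the barrier would fail — in accordance with the failure of the strong minimum principle for $\Pmk$ recalled in the introduction.
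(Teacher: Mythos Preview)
The paper does not prove this statement; Theorem~\ref{SMP} is quoted in the Preliminaries without proof (the surrounding remark points to \cite{BGI}), so there is nothing to compare against.

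Your argument is correct and self-contained. It is the classical Hopf barrier proof transported to the viscosity framework, with the crucial operator-specific step handled cleanly: for $w(x)=e^{-\alpha|x-y|^2}-e^{-\alpha R^2}$ the Hessian has the single radial eigenvalue $2\alpha e^{-\alpha r^2}(2\alpha r^2-1)$ and $N-1$ copies of $-2\alpha e^{-\alpha r^2}$, so the $k$ largest eigenvalues are the radial one together with $k-1$ negative ones, giving $\Ppk(D^2w)=2\alpha e^{-\alpha r^2}(2\alpha r^2-k)>0$ on the annulus once $\alpha>2k/R^2$. The two-step comparison (first on $\overline A$ to get $u\ge m+\varepsilon w$, then in a full neighborhood of $x_0$ using $w<0$ outside $B_R(y)$) is exactly what is needed to legitimately test the supersolution property at $x_0$, and the positive $1$-homogeneity of $\Ppk$ converts $\Ppk(D^2\phi)\le 0$ into $\Ppk(D^2w)\le 0$. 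Your closing remark that the same barrier fails for $\Pmk$ because one would then be summing $k$ copies of the negative eigenvalue is also correct and matches the paper's comment.

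One cosmetic point: the interior-ball construction does not in general force $\overline{B_R(y)}\cap\Omega_0$ to be a single point, but you never use uniqueness of $x_0$, so this is harmless.
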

\noindent Note that the previous theorem fails for $\Pmk$, see \cite{BGI}.

We conclude the section with two lemmas. They will be used respectively in sections \ref{Whole space}-\ref{Half space}.

\begin{lemma}\label{rem1} 
 Let $f\in C^1([0,+\infty))$ such that $f(u)f^\prime(u)\geq 0$  for $u\geq0$. Then,  for any $R\leq+\infty$ and any solution $u\in C^2([0,R))$  of
\begin{equation}\label{eqremark}
k\frac{u^\prime(r)}{r}=-f(u)\quad \mbox{in}\quad (0,R)\quad\mbox{and}\quad u'(0)=0,
\end{equation}
the radial function $v(x)=u(|x|)$ is a solution in $B_R$ of
$$\Pmk(D^2v)+f(v)=0.$$
\end{lemma}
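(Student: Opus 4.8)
The plan is to compute the eigenvalues of $D^2 v$ for a radial function $v(x)=u(|x|)$ and then check the viscosity (here in fact classical) inequality directly, keeping careful track of the ordering of the eigenvalues, which is the crucial point for $\Pmk$. Recall that for $v(x)=u(|x|)$ with $u\in C^2$, the Hessian $D^2v(x)$ at a point $x\neq 0$ has eigenvalue $u''(|x|)$ in the radial direction $x/|x|$ and eigenvalue $u'(|x|)/|x|$ with multiplicity $N-1$ in the directions orthogonal to $x$. At the origin, $D^2v(0)=u''(0)\,I$ by the condition $u'(0)=0$ and Taylor expansion, so all eigenvalues equal $u''(0)$; from \eqref{eqremark} one gets $k u''(0)=-f(u(0))$, hence $\Pmk(D^2v(0))=k u''(0)=-f(v(0))$ and the equation holds at $0$.

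For $x\neq 0$, the key observation is the sign of $u'(r)$. First I would show $u'(r)\le 0$ on $(0,R)$: this is where the hypothesis $f(u)f'(u)\ge 0$ enters. Writing $g(r)=f(u(r))$, from $k u'/r = -f(u)$ we get, wherever $u$ is $C^2$, that $k(u'/r)' = -f'(u)u' = \frac{r}{k}f'(u)f(u)\ge 0$, so $r\mapsto u'(r)/r$ is nondecreasing. Combined with the initial condition and the fact that $u'(r)/r \to u''(0)$ as $r\to 0^+$, one argues that $u'(r)/r$ cannot become positive without forcing a contradiction with monotonicity and the ODE — more directly, $(f(u(r)))' = f'(u)u' = -\frac{r}{k}f'(u)f(u)$, so if we set $h(r)=\frac12 f(u(r))^2$ then $h'(r) = f(u)f'(u)u' = -\frac{r}{k}(f(u)f'(u))\,\mathrm{sgn}\cdots$; cleaner: $f(u(r))f'(u(r))\ge0$ together with $u'=-\frac{r}{k}f(u)$ gives $\frac{d}{dr}\big(f(u(r))\big) = -\frac{r}{k}f(u)f'(u)\le 0$, so $f(u(r))$ is nonincreasing, hence keeps the sign of $f(u(0))$ for as long as it is nonzero, and in fact $|f(u(r))|\le|f(u(0))|$. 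The honest route is: once we know $u'(r)/r$ is nondecreasing and equals $u''(0)=-f(u(0))/k$ at $0$, the sign of $u'$ is determined by whether $u''(0)\le 0$; but regardless of signs, what we actually need is only the comparison $u''(r)$ versus $u'(r)/r$, i.e. that $u''(r) = (u'/r)\cdot r)' $ evaluated gives $u''(r) = u'(r)/r + r(u'(r)/r)' \ge u'(r)/r$ since $(u'/r)'\ge 0$ and $r>0$. Hence $u''(r)\ge u'(r)/r$ for all $r\in(0,R)$.

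With the inequality $u''(r)\ge u'(r)/r$ in hand, the ordering of the eigenvalues of $D^2v(x)$ is settled: the smallest eigenvalue is $u'(|x|)/|x|$ (with multiplicity at least $N-1$, so in particular at least $k$ since $k<N$), and therefore
\[
\Pmk(D^2v(x)) = \sum_{i=1}^k \lambda_i(D^2v(x)) = k\,\frac{u'(|x|)}{|x|} = -f(u(|x|)) = -f(v(x)),
\]
using \eqref{eqremark} in the third step. Thus $\Pmk(D^2v)+f(v)=0$ holds classically at every $x\neq 0$, and by the computation above also at $x=0$; since $v\in C^2(B_R)$, it is in particular a viscosity solution in $B_R$. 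The main obstacle, and the only delicate point, is establishing $u''(r)\ge u'(r)/r$ rigorously — equivalently that $r\mapsto u'(r)/r$ is nondecreasing — which is exactly what the sign condition $f(u)f'(u)\ge 0$ is engineered to give, via differentiating the ODE \eqref{eqremark} and using $u'(0)=0$ to fix the behaviour at the origin.
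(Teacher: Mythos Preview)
Your argument is correct and essentially identical to the paper's: both differentiate \eqref{eqremark} to obtain $u''(r)\ge u'(r)/r$ from the hypothesis $f(u)f'(u)\ge 0$, which fixes the ordering of the eigenvalues of $D^2v$ and gives $\Pmk(D^2v)=k\,u'(|x|)/|x|=-f(v)$. The paper does this in one line---writing $u''(r)=-f(u)/k+(r^2/k^2)f'(u)f(u)\ge u'(r)/r$ directly---whereas your exposition meanders through several unnecessary side considerations (the sign of $u'$, the sign and monotonicity of $f(u(r))$) before arriving at the equivalent monotonicity of $u'/r$; those detours can simply be dropped.
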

\begin{proof}
Since $$u^{\prime\prime}(r)=\frac{-f(u)}{k}+\frac{r^2}{k^2}f^\prime(u)f(u)\geq \frac{u^\prime(r)}{r}$$  using \eqref{eqremark}, we deduce that
$$\Pmk(D^2v(x))=k\frac{u^\prime(|x|)}{|x|}=-f(v(x))\quad\text{ if $x\neq0$}$$ and $$\Pmk(D^2v(0))=ku''(0)=-f(v(0)).$$
\end{proof}

In the section \ref{Half space} we will use explicit classical subsolutions of type $\varphi(x)=x_Nf(|x|)$, whose Hessian is
$$D^2\varphi(x)=f'(|x|)\left[e_N\otimes\frac {x}{|x|}+\frac {x}{|x|}\otimes e_N\right]+x_N\left[\left(f''(|x|)-\frac{f'(|x|)}{|x|}\right)\frac {x}{|x|}\otimes\frac {x}{|x|}+\frac{f'(|x|)}{|x|}I\right].
$$
The computation of the eigenvalues is  straightforward. We have (see also \cite{L,L2}) 

\begin{lemma}\label{eigenvalues}
Let $\varphi(x)=x_N f(|x|)$ such that $f\in C^2((0,+\infty))$ and $x\in\mathbb R^N_+$. Set 
$$
b=x_N\left(f''(|x|)-\frac{f'(|x|)}{|x|}\right),\quad c=f'(|x|),\quad d=x_N\frac{f'(|x|)}{|x|}\,.
$$
 Then the eigenvalues of the $D^2\varphi (x)$ are:
\begin{itemize}
	\item $d$ with multiplicity (at least $N-2$) and eigenspace  ${\rm span}\left\{ e_N,x\right\}^{^\bot}$;
	\item $\displaystyle d+\frac{b+2c\frac{x_N}{|x|} \pm\sqrt{\left(b+2c\frac{x_N}{|x|}\right)^2+4c^2\left(1-\left(\frac{x_N}{|x|}\right)^2\right)}}{2}$\,. 
\end{itemize}
In particular the ordered eigenvalue of $D^2\varphi$ are
\begin{equation*}
\begin{split}
\lambda_1(D^2\varphi)&=d+\frac{b+2c\frac{x_N}{|x|}-\sqrt{\left(b+2c\frac{x_N}{|x|}\right)^2+4c^2\left(1-\left(\frac{x_N}{|x|}\right)^2\right)}}{2}\\
\lambda_2(D^2\varphi)&=\ldots=\lambda_{N-1}(D^2\varphi)=d\\
\lambda_{N}(D^2\varphi)&= d+\frac{b+2c\frac{x_N}{|x|}+\sqrt{\left(b+2c\frac{x_N}{|x|}\right)^2+4c^2\left(1-\left(\frac{x_N}{|x|}\right)^2\right)}}{2}\,.
\end{split}
\end{equation*}

\end{lemma}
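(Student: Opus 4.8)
The plan is to diagonalise $D^2\varphi(x)$ directly from the explicit expression for the Hessian displayed just above the statement. Writing $r=|x|$, $\nu=\frac{x}{|x|}$ and $t=\langle e_N,\nu\rangle=\frac{x_N}{|x|}$, that expression becomes
\[
D^2\varphi(x)=d\,I+b\,\nu\otimes\nu+c\left(e_N\otimes\nu+\nu\otimes e_N\right),
\]
with $b,c,d$ exactly as in the statement. The key remark is that the symmetric matrix $M:=D^2\varphi(x)-d\,I=b\,\nu\otimes\nu+c(e_N\otimes\nu+\nu\otimes e_N)$ has range contained in $V:={\rm span}\{e_N,\nu\}$, hence (being symmetric) it annihilates $V^\perp$. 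Since $\dim V^\perp\geq N-2$, this already yields that $d$ is an eigenvalue of $D^2\varphi(x)$ whose eigenspace contains ${\rm span}\{e_N,\nu\}^\perp$, i.e. of multiplicity at least $N-2$; this will account for $\lambda_2=\cdots=\lambda_{N-1}=d$ once the two remaining eigenvalues are located on either side of $d$.

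Next I would compute the restriction $M|_V$ in a convenient orthonormal basis. Taking $\{\nu,w\}$ with $e_N=t\,\nu+\sqrt{1-t^2}\,w$, a short computation gives
\[
M|_V=\begin{pmatrix} b+2ct & c\sqrt{1-t^2}\\[2pt] c\sqrt{1-t^2} & 0\end{pmatrix},
\]
so that ${\rm tr}(M|_V)=b+2ct$ and $\det(M|_V)=-c^2(1-t^2)$. Solving the characteristic equation, the two eigenvalues of $M|_V$ are $\frac12\bigl(b+2ct\pm\sqrt{(b+2ct)^2+4c^2(1-t^2)}\bigr)$, and adding back $d$ reproduces precisely the two non-constant eigenvalues listed in the statement, recalling $t=\frac{x_N}{|x|}$.

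It remains to record the ordering. Since the square root dominates $|b+2ct|$, the minus branch above is $\leq 0$ and the plus branch is $\geq 0$; translating by $d$ therefore gives $\lambda_1(D^2\varphi)\leq\lambda_2=\cdots=\lambda_{N-1}=d\leq\lambda_N(D^2\varphi)$, with the minus (resp. plus) branch equal to $\lambda_1$ (resp. $\lambda_N$). Finally I would note that in the degenerate configuration $x\parallel e_N$ one has $t=1$, $V$ is spanned by $e_N$ alone, and the formulas degenerate consistently ($\sqrt{(b+2c)^2}=|b+2c|$, giving the eigenvalues $d$ and $d+b+2c$), so the statement — phrased with ``at least $N-2$'' — remains correct. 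There is essentially no serious obstacle in this argument; the only point requiring a little care is the choice of the orthonormal basis of $V$ and the check that the case $x\parallel e_N$ is subsumed in the general formula.
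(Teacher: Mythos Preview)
Your proof is correct and is precisely the direct computation that the paper calls ``straightforward'' without spelling out (it only displays the Hessian in the form $D^2\varphi=dI+b\,\nu\otimes\nu+c(e_N\otimes\nu+\nu\otimes e_N)$ and refers to \cite{L,L2}). The decomposition $D^2\varphi=dI+M$ with $M$ symmetric of rank at most two and range contained in ${\rm span}\{e_N,\nu\}$, followed by diagonalising the $2\times 2$ block in an orthonormal basis of that plane, is exactly the intended argument.
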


\section{Entire solutions}\label{Whole space}
\subsection{Entire solutions of $\Ppmk$} 
In this subsection we prove Theorem \ref{liou}.
Concerning $F\equiv\Pmk$, the result is immediate since for any nonconstant convex function $f\in C^2(\R;[0,+\infty))$  and $i\in\left\{1,\ldots,N\right\}$, $u(x)=f(x_i)$ is an entire nontrivial solution of
$$ \Pmk (D^2u)=0.$$
\noindent Instead, when $F\equiv\Ppk$ the proof is more involved and uses the three circles of Hadamard principle. The fundamental solutions of the operator $\Ppk$, namely the classical radial solutions of the equation
\begin{equation}\label{eq4}
\Ppk(D^2\varphi)=0\quad\mbox{in}\quad\Rn\backslash\left\{0\right\},
\end{equation}
are defined by
\begin{equation}\label{fond_sol}
\phi(x)=
\begin{cases}
-c_1|x|+c_2 & \text{if $k=1$}\\
-c_1\log|x|+c_2 & \text{if $k=2$}\\
c_1|x|^{2-k}+c_2 & \text{if $k>2$},
\end{cases}
\end{equation}
with constants $c_1\geq0$ and $c_2\in\R$. We notice that differently from the uniformly elliptic case $k=N$, there are not concave and strictly increasing radial solutions of \eqref{eq4}.

Henceforth for any supersolution $u\in LSC(\overline B_R)$  of $$\Ppk(D^2u)=0\quad\mbox{ in}\quad B_R$$
we set $m(r)=\min_{x\in\overline B_r}u(x)$. By definition the function $r\in[0,R]\mapsto m(r)$ is nonincreasing and lower semincontinuous.
Following \cite[Theorem 3.1]{CL} we have a nonlinear version of  Hadamard three circles theorem for $\Ppk$.

\begin{theorem}\label{three circles}
Let $u\in LSC(\Omega)$ be a viscosity supersolution of $\Ppk(D^2u)=0$ in a domain $\Omega\supseteq\overline B_{r_2}$. Then for every fixed $0<r_1<r_2$ and any $r_1\leq r\leq r_2$ one has
\begin{equation}\label{Hadamard}
m(r)\geq
\begin{cases}
\displaystyle\frac{m(r_1)(r_2-r)+m(r_2)(r-r_1)}{r_2-r_1} & \text{if $k=1$}\\
\displaystyle\frac{m(r_1)\log\left(\frac{r_2}{r}\right)+m(r_2)\log\left(\frac{r}{r_1}\right)}{\log\left(\frac{r_2}{r_1}\right)}  & \text{if $k=2$}\\
\displaystyle\frac{m(r_1)(r^{2-k}-r_2^{2-k})+m(r_2)(r_1^{2-k}-r^{2-k})}{r_1^{2-k}-r_2^{2-k}}  & \text{if $k>2$}.
\end{cases}
\end{equation}
\end{theorem}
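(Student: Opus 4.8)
The plan is to compare $u$, on each closed annulus $\overline{B}_{r_2}\setminus B_{r_1}$, with the fundamental solution $\phi$ of \eqref{fond_sol} whose two free constants are fixed so that $\phi$ matches $m$ on the two bounding spheres. Fix $0<r_1<r_2$. Since $r\mapsto m(r)$ is nonincreasing and $u\in LSC(\overline{B}_{r_2})$, the values $m(r_1)\ge m(r_2)$ are finite, and there is a unique choice of $(c_1,c_2)$ in \eqref{fond_sol} with $\phi(r_1)=m(r_1)$, $\phi(r_2)=m(r_2)$. An elementary computation in each of the three cases $k=1$, $k=2$, $k>2$ shows that the inequality $m(r_1)\ge m(r_2)$ forces $c_1\ge 0$; hence, by the discussion preceding the statement, $\phi$ is a $C^2$ radial classical solution of $\Ppk(D^2\phi)=0$ on the annulus (which avoids the origin because $r_1>0$), in particular a viscosity subsolution there, and with these constants $\phi$ is radially nonincreasing.

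The heart of the matter is the comparison inequality $u\ge\phi$ on $\overline{B}_{r_2}\setminus B_{r_1}$, knowing that $u\ge \phi$ on $\partial B_{r_1}\cup\partial B_{r_2}$ by the very definition of $m$. This is exactly the point where the strong degeneracy of $\Ppk$, i.e. the lack of any strict monotonicity, has to be absorbed, and I would do it by the standard additive perturbation. For $\eta>0$ set $u_\eta=u-\eta|x|^2$ and $\phi_\eta=\phi-\eta r_2^2$. If a $C^2$ function $\psi$ touches $u_\eta$ from below at a point $x_0$, then $\psi+\eta|x|^2$ touches $u$ from below at $x_0$, so $\Ppk(D^2\psi(x_0)+2\eta I)\le 0$, and then \eqref{suba} with $Y=2\eta I$ (for which $\Pmk(Y)=2k\eta$) gives $\Ppk(D^2\psi(x_0))\le -2k\eta$; that is, $u_\eta$ is a strict viscosity supersolution of $\Ppk(D^2 w)\le -2k\eta$, while $\phi_\eta$ still solves $\Ppk(D^2\phi_\eta)=0$ and satisfies $\phi_\eta\le m(r_j)-\eta r_j^2\le u_\eta$ on $\partial B_{r_j}$ for $j=1,2$. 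The LSC function $u_\eta-\phi_\eta$ attains its minimum on the compact annulus; were that minimum negative it would be attained at an interior point $x_0$, and then $\phi_\eta$ augmented by the constant $u_\eta(x_0)-\phi_\eta(x_0)$ would touch $u_\eta$ from below at $x_0$, forcing the contradiction $0=\Ppk(D^2\phi_\eta(x_0))\le -2k\eta<0$. Hence $u_\eta\ge\phi_\eta$ on the annulus, and letting $\eta\to0^+$ yields $u\ge\phi$ there.

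It remains to pass from $u\ge\phi$ on the annulus to the stated bound on $m(r)$. Fix $r\in[r_1,r_2]$. Since $\phi$ is radially nonincreasing, $\min_{\overline{B}_r\setminus B_{r_1}}\phi=\phi(r)$ and $m(r_1)=\phi(r_1)\ge\phi(r)$, so
$$
m(r)=\min\Big\{m(r_1),\ \min_{\overline{B}_r\setminus B_{r_1}}u\Big\}\ \ge\ \min\{m(r_1),\phi(r)\}=\phi(r).
$$
Finally, solving the two interpolation conditions $\phi(r_1)=m(r_1)$, $\phi(r_2)=m(r_2)$ explicitly for $\phi(r)$ in the three forms of \eqref{fond_sol} is a routine algebraic rearrangement that reproduces verbatim the right-hand side of \eqref{Hadamard}, which completes the proof. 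The only genuinely delicate step is the comparison argument above; everything else is bookkeeping with the fundamental solutions and the monotonicity of $m$.
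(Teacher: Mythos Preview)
Your proof is correct and follows exactly the comparison-with-fundamental-solutions strategy that the paper attributes to \cite[Theorem 3.1]{CL} (the paper itself does not spell out a proof of this statement). The additive perturbation $u_\eta=u-\eta|x|^2$ you use to absorb the degeneracy of $\Ppk$ and obtain a strict supersolution inequality is a clean, standard way to make the comparison argument go through.
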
   

In the cases $k=1$ and $k=2$ the fundamental solutions blow down to $-\infty$ for $|x|\to+\infty$. This  allows us to obtain the following Liouville type result, more general of that expressed in Theorem \ref{liou}-\eqref{5} since we are not going to assume $u\geq0$. 
\begin{theorem}\label{Liouville}
Let $u\in LSC(\Rn)$ be a viscosity supersolution of
\begin{equation}\label{eq6}
\Ppk(D^2u)=0\quad\mbox{in}\quad\Rn
\end{equation}
for $k=1$ or $k=2$. Assume that
\begin{equation}\label{Liouville_hypothesis1}
\begin{cases}
\displaystyle\limsup_{r\to+\infty}\frac{m(r)}{r}\geq0\quad\text{if $k=1$}\\
\displaystyle\limsup_{r\to+\infty}\frac{m(r)}{\log r}\geq0\quad\text{if $k=2$}.
\end{cases}
\end{equation}
Then	$u$ is constant. 
\end{theorem}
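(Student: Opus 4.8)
The plan is to combine the nonlinear Hadamard three circles inequality of Theorem~\ref{three circles} with the Strong Minimum Principle (Theorem~\ref{SMP}) and the growth hypothesis \eqref{Liouville_hypothesis1}. First I would fix a radius $r_1>0$ and treat $m(r_1)$ as a constant; the inequality \eqref{Hadamard} then says that on $[r_1,r_2]$ the function $m$ lies above the affine (in the appropriate variable $r$, $\log r$, or $r^{2-k}$) interpolant of its values at the endpoints. For $k=1$, solving \eqref{Hadamard} for $m(r_1)$ and rearranging gives
\begin{equation*}
m(r_1)\leq \frac{(r_2-r_1)\,m(r)-(r-r_1)\,m(r_2)}{r_2-r}\,,
\end{equation*}
and an analogous rearrangement holds for $k=2$ with $\log$ replacing the linear terms. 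The point is that in both cases the coefficient of $m(r_2)$ tends to $0$ (after dividing through by $r_2$, resp. $\log r_2$) as $r_2\to+\infty$, precisely because the fundamental solution blows down: $|x|\to\infty$ or $\log|x|\to\infty$.

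Next I would let $r_2\to+\infty$ along a sequence realizing the $\limsup$ in \eqref{Liouville_hypothesis1}. Keeping $r$ fixed with $r_1<r<r_2$, the term $\frac{r-r_1}{r_2-r}\,m(r_2)$ in the $k=1$ case equals $\frac{(r-r_1)}{r_2-r}\cdot r_2\cdot\frac{m(r_2)}{r_2}$; since $\frac{r_2}{r_2-r}\to 1$ and $\limsup_{r_2\to\infty}\frac{m(r_2)}{r_2}\geq 0$, the $\liminf$ of this product is $\geq 0$, so passing to the limit yields $m(r_1)\leq m(r)$. Combined with the fact that $m$ is nonincreasing, this forces $m(r)=m(r_1)$ for all $r\geq r_1$, and then, letting $r_1\to 0^+$ and using lower semicontinuity of $m$ at $0$, we get $m(r)\equiv m(0)=:c$ for all $r\geq 0$. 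The case $k=2$ is identical with $\log r_2$ in place of $r_2$.

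Finally, $m(r)\equiv c$ means $u(x)\geq c$ everywhere and $u$ attains the value $c$ at the origin (and in fact at some point of each sphere $\partial B_r$, by lower semicontinuity on the compact $\overline B_r$ together with $m(r)=c$). Thus $u$ achieves an interior minimum, and the Strong Minimum Principle for $\Ppk$ (Theorem~\ref{SMP}) immediately gives that $u$ is constant. The main technical point to be careful about is the limit passage: one must check that the coefficient multiplying $m(r_2)$ genuinely has the right sign and vanishing rate, and that taking $\limsup$ on $m(r_2)/r_2$ (not $\lim$) is enough — this works because that term enters with a favorable sign, so a $\limsup\geq 0$ hypothesis suffices to push the inequality through. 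A minor subtlety is justifying that $m$ is lower semicontinuous at $r=0$ so that $\lim_{r_1\to 0^+}m(r_1)=m(0)$; this follows from $u\in LSC(\Rn)$ and the definition $m(r)=\min_{\overline B_r}u$.
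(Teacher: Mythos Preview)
Your proof is correct and follows essentially the same approach as the paper's: pass to the limit $r_2\to+\infty$ in the Hadamard three circles inequality \eqref{Hadamard} using the growth hypothesis \eqref{Liouville_hypothesis1} to obtain $m(r_1)=m(r)$ for all $0<r_1<r$, then invoke lower semicontinuity at the origin and the Strong Minimum Principle. The paper's proof is two lines; you have simply expanded the details of the limit passage (rearranging to isolate $m(r_1)$, tracking the coefficient of $m(r_2)$, and noting that a subsequence realizing the $\limsup$ suffices).
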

\begin{proof}[\rm\textbf{Proof.}]
Send $r_2\to+\infty$ in \eqref{Hadamard} and use the assumption \eqref{Liouville_hypothesis1} to obtain $m(r_1)=m(r)$ for any $0<r_1<r$. By lower semicontinuity $u(0)=\lim_{r_1\to0}m(r_1)=m(r)$, hence the strong minimum principle yields $u\equiv u(0)$.
\end{proof}

To finish the proof of Theorem \ref{liou}-\eqref{5} we exhibit  nontrivial bounded supersolutions of \eqref{eq6} in the case $k>2$:
$$
u(x)=\begin{cases}
\frac18\left(k(k-2)|x|^4-2(k^2-4)|x|^2+k(k+2)\right)& \text{if $|x|\leq1$}\\
|x|^{2-k} & \text{if $|x|>1$}.
\end{cases}
$$

\begin{remark}
\rm The assumption \eqref{Liouville_hypothesis1} cannot be weakened. As a matter of fact for any $\varepsilon>0$ the functions
$$
u_1(x)=\begin{cases}
\frac\varepsilon8\left(|x|^4-6|x|^2-3\right)& \text{if $|x|\leq1$}\\
-\varepsilon |x| & \text{if $|x|>1$}
\end{cases}\quad
\mbox{and}\quad
u_2(x)=\begin{cases}
\varepsilon\left(\frac{|x|^4}{4}-|x|^2+\frac34\right)& \text{if $|x|\leq1$}\\
-\varepsilon \log|x| & \text{if $|x|>1$}
\end{cases}
$$
are respectively nontrivial classical solutions of $\Ppk(D^2u)=0$ in $\Rn$ for $k=1$ and $k=2$. Nevertheless
$$
\lim_{r\to+\infty}\frac{m(r)}{r}=-\varepsilon\quad\mbox{and}\quad\lim_{r\to+\infty}\frac{m(r)}{\log r}=-\varepsilon.
$$
\end{remark}

We finally observe that Theorem \ref{three circles} gives (for $r_2\to+\infty$) the following

\begin{proposition}\label{eq7}
Let $u$ be a nonnegative supersolution of \eqref{eq6}. Then for $k\geq2$ the map
\begin{equation}\label{monotonicity}
r\in[0,+\infty)\mapsto m(r)r^{k-2}
\end{equation}
 is nondecreasing.
\end{proposition}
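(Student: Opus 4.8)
The statement to prove is Proposition~\ref{eq7}: for a nonnegative supersolution $u$ of $\Ppk(D^2u)=0$ in $\R^N$ with $k\geq 2$, the map $r\mapsto m(r)r^{k-2}$ is nondecreasing. The plan is to read off this monotonicity directly from the Hadamard three circles inequality of Theorem~\ref{three circles} by letting the outer radius $r_2\to+\infty$, treating the cases $k=2$ and $k>2$ separately.

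For $k>2$, I would start from the third branch of \eqref{Hadamard}: for $0<r_1<r<r_2$,
$$
m(r)\geq\frac{m(r_1)\left(r^{2-k}-r_2^{2-k}\right)+m(r_2)\left(r_1^{2-k}-r^{2-k}\right)}{r_1^{2-k}-r_2^{2-k}}\,.
$$
Since $2-k<0$, we have $r_2^{2-k}\to 0$ as $r_2\to+\infty$, and since $u\geq 0$ we have $m(r_2)\geq 0$, so the second term in the numerator is nonnegative while the first term converges to $m(r_1)r^{2-k}$ and the denominator converges to $r_1^{2-k}$. Passing to the limit $r_2\to+\infty$ (keeping $r_1<r$ fixed) yields $m(r)\geq m(r_1)r^{2-k}/r_1^{2-k}$, i.e. $m(r)r^{k-2}\geq m(r_1)r_1^{k-2}$. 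This is exactly the claimed monotonicity for $0<r_1<r$; the value at $r=0$ is handled by lower semicontinuity of $m$ (or is irrelevant since $m(0)\cdot 0^{k-2}=0\leq m(r)r^{k-2}$ as $m$ is nonnegative). For $k=2$, the same procedure applied to the second branch of \eqref{Hadamard} gives $m(r)\geq m(r_1)$ in the limit $r_2\to+\infty$, which is the monotonicity of $m(r)r^{0}=m(r)$, already known from the definition of $m$ but consistent with the statement.

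There is no serious obstacle here: the only points needing a word of care are (i) that one may legitimately pass to the limit in $r_2$, which is immediate because $m(r_2)\geq 0$ lets us simply drop or bound the offending term rather than control its sign delicately, and (ii) the behaviour at the endpoint $r=0$, which is covered by lower semicontinuity of $r\mapsto m(r)$ together with nonnegativity. Thus the proof is essentially one line of asymptotics from Theorem~\ref{three circles}, and I would present it as such.
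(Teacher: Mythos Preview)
Your proposal is correct and follows exactly the paper's own approach: the paper simply states that the proposition is obtained from Theorem~\ref{three circles} by sending $r_2\to+\infty$, and you have spelled out precisely this limit using the nonnegativity of $m(r_2)$ to discard the second numerator term. One small correction to your aside in the $k=2$ case: the inequality $m(r)\geq m(r_1)$ is \emph{not} already known from the definition of $m$---by definition $m$ is nonincreasing, so the nondecreasing direction is genuinely the content of the three circles argument (and the two together then force $m$ to be constant, in line with Theorem~\ref{Liouville}).
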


\subsection{Entire solutions of  $\Pmk(D^2u)+u^p=0$.}

\begin{proof}[\rm\textbf{Proof of Theorem \ref{P1}}]

\noindent\textbf{(\ref{2})} Let $p\in(0,1)$ and let $R>0$. We prove that the radial function
$$
u(x)=
\begin{cases}
{\left[\frac{1-p}{2k}\left(R^2-|x|^2\right)\right]}^\frac{1}{1-p} & \text{if $|x|\leq R$}\\
0 & \text{elsewhere}.
\end{cases}
$$
is a nonnegative viscosity solution of \eqref{eq1}.\\ First we note that $u\in C^1(\Rn)\cap C^\infty\left(\mathbb R^N\backslash\left\{x:\,|x|=R\right\}\right)$  and $u\in C^2(\Rn)$ if $p>\frac12$. Moreover by a straightforward computation  $u$ is a classical solution of $$\Pmk(D^2u)+u^p=0\quad\text{in}\;\mathbb R^N\backslash\left\{x:\,|x|=R\right\}.$$ Now we prove that $u$  satisfies this equation also in $\left\{x:\,|x|=R\right\}$ in the viscosity sense. Fix $x_0$ such that $|x_0|=R$.

\smallskip
\emph{u is a subsolution.} If $p<\frac12$ there are no $C^2$ test functions touching $u$ by above at $x_0$, so we have nothing to prove. If otherwise $p\geq\frac12$ and  $\varphi\in C^2(\mathbb R^N)$ is such that 
$$
0=(u-\varphi)(x_0)\geq(u-\varphi)(x)\qquad\forall x\in B_{\delta}(x_0)
$$
for some positive $\delta$, then $\varphi$ has a local minimum point at $x_0$ and hence $D^2\varphi(x_0)\geq0$. It follows $$\Pmk(D^2\varphi(x_0))+\varphi^p(x_0)\geq0.$$

\smallskip
\emph{u is a supersolution.} Suppose instead that
$$
0=(u-\varphi)(x_0)\leq(u-\varphi)(x)\qquad\forall x\in B_{\delta}(x_0)
$$
for some positive $\delta$. We claim that 
\begin{equation}\label{3eq3}
\lambda_{N-1}(D^2\varphi(x_0))\leq0,
\end{equation}
from which the conclusion follows. To prove \eqref{3eq3} we use the variational characterization (Courant-Fischer minmax theorem)
$$
\lambda_{N-1}(D^2\varphi(x_0))=\min_{V}\max_{\xi\in V:\,|\xi|=1}\left\langle D^2\varphi(x_0)\xi,\xi\right\rangle
$$ 
where the minimum is taken over all possible $(N-1)$-dimensional subspaces $V$ of $\mathbb R^N$. 
Let $W:=\left\langle x_0\right\rangle^\bot$. Clearly $\dim W=N-1$. Moreover for any $\xi\in W$ such that $|\xi|=1$ and $t\in(-\delta,\delta)$, we have
\begin{equation*}
\begin{split}
0=u(x_0+t\xi)&\geq\varphi(x_0+t\xi) \\
&=\varphi(x_0)+\left\langle D\varphi(x_0),t\xi\right\rangle+\frac12\left\langle D^2\varphi(x_0)t\xi,t\xi\right\rangle+o(t^2)\\
&=\frac12t^2\left\langle D^2\varphi(x_0)\xi,\xi\right\rangle+o(t^2).
\end{split}
\end{equation*}
Here we have used the facts that $\varphi(x_0)=u(x_0)=0$ and $D\varphi(x_0)=Du(x_0)=0$ since $\varphi$ touches $u$ by below at $x_0$. 
Dividing by $t^2\neq0$ and letting $t\to0$, we deduce that $\left\langle D^2\varphi(x_0)\xi,\xi\right\rangle\leq0$ for any $\xi\in W$ such that $|\xi|=1$. Then
$$\lambda_{N-1}(D^2\varphi(x_0))\leq \max_{\xi\in W:\,|\xi|=1}\left\langle D^2\varphi(x_0)\xi,\xi\right\rangle\leq0 $$
as we wanted to show.
The case $p\geq1$ is included in (\ref{3}).

\noindent\textbf{(\ref{3})} For $p\geq1$ we make use of Lemma \ref{rem1} with $f(u)=u^p$. Indeed for $p>1$  the function
$$
u(|x|)=\left[\frac{2k}{(p-1)(\mu+|x|^2)}\right]^\frac{1}{p-1}
$$
satisfies \eqref{eqremark} for any $\mu>0$. A similar conclusion holds in the case $p=1$ with $$u(|x|)=\mu\exp\left(-\frac{|x|^2}{2k}\right).$$
\textbf{(\ref{4})} By contradiction let $u$ be a positive viscosity supersolution of \eqref{eq1} and $p\in (0,1)$. Let $v=\frac{1}{1-p}u^{1-p}$ as in \cite{BK}. Then 
$$D^2v=-pu^{-p-1}\grad u\otimes \grad u +u^{-p}D^2u\leq u^{-p}D^2u$$
and
$$\Pmk(D^2v)\leq -1\quad \mbox{in}\quad \R^N.$$
This in particular implies that for any $R>0$ the unique solution of
$$
\left\{\begin{array}{lc}
-\Pmk(D^2 w_R)=1 & \mbox{in}\ B_R\\
w_R=0 & \mbox{on}\ \partial B_R
\end{array}
\right.
$$
satisfies $w_R\leq v$ for any $R$. But this is a contradiction since $w_R(x)=-\frac{|x|^2}{2k}+\frac{R^2}{2k}$.
\end{proof}

\begin{remark}
\rm Concerning Theorem \ref{P1}-(\ref{4}), it is worth to point out that the nonexistence of entire supersolutions continues to hold for $p\leq0$. By contradiction let $u$ be a positive supersolution of \eqref{eq1}. Then  $v=\frac{1}{u}$ satisfies in the viscosity sense
$$
D^2v=-\frac{D^2u}{u^2}+\frac{2}{u^3}\grad u\otimes\grad u\geq-\frac{D^2u}{u^2}
$$ 
and, using the monotonicity of $\Ppk$, 
$$
\Ppk(D^2v)-v^{2-p}=0\quad\mbox{in}\quad\Rn.
$$ 
Since $p\leq0$, we obtain a contradiction to the the Keller-Osserman type result of \cite[Theorem 1.1 and Corollary 3.6]{CDLV1}.
\end{remark}

\subsection{Entire solutions of  $\Ppk(D^2u)+u^p=0$.}
\begin{proof}[\rm\textbf{Proof of Theorem \ref{P2}}]
\noindent\textbf{(\ref{6})} 
Let $2<k<N$ and let $0<p\leq\frac{k}{k-2}$. Following \cite[Theorem 4.1]{CL}, we suppose by contradiction that $u>0$ is a viscosity supersolution of 
\begin{equation}\label{eq10}
\Ppk(D^2u(x))+u^p(x)=0\quad\mbox{in}\quad \Rn.
\end{equation}
For $r>0$ let $\varphi(|x|)=m(r)\left[1-\frac{{\left[(|x|-r)^+\right]}^3}{r^3}\right] $. The difference $u-\varphi$ attains its minimum at a point $x_r$ such that $r\leq|x_r|<2r$. Using \eqref{monotonicity},\eqref{eq10} we have
\begin{equation}\label{eq11}
m^p(2r)\leq u^p(x_r)\leq -k\frac{\varphi'(|x_r|)}{|x_r|}\leq 3k\frac{m(r)}{r^2}\leq C\frac{m(2r)}{r^2}
\end{equation}
where $C=3k2^{k-2}$. If $0<p\leq1$ we obtain the contradiction
$$
r^2\leq Cm^{1-p}(2r)\leq Cu^{1-p}(0)\quad \forall r>0.
$$
If $p>1$ then from \eqref{eq11}
\begin{equation}\label{eq12}
m(2r)(2r)^{k-2}\leq\frac{C}{r^{\frac{k-p(k-2)}{p-1}}},
\end{equation}
with $C={(3k2^{p(k-2)})}^{\frac{1}{p-1}}$, and again we get a contradiction if $p<\frac{k}{k-2}$, since the right hand side of \eqref{eq12} tends to 0 while  $m(2r)(2r)^{k-2}$ is a positive nondecreasing function. For $p=\frac{k}{k-2}$ we have the bound
\begin{equation}\label{eq13}
m(2r)(2r)^{k-2}\leq C.
\end{equation}
Let us consider the smooth radial function $\psi(|x|)=\alpha\frac{\log|x|}{|x|^{k-2}}+\beta$, where $\alpha>0$ and $\beta$ are to be  suitably chosen in order to compare $\psi$ and $u$ in $\Omega=\left\{x\in\Rn:r_1<|x|<r_2\right\}$. First note that for $|x|>r_1:=\exp\left(\frac{2k-3}{(k-1)(k-2)}\right)$ the function $\psi$ is convex and decreasing. Hence, it is a solution for $|x|>r_1$ of the equation
\begin{equation}\label{eq14}
\Ppk(D^2\psi(|x|))=\psi''(|x|)+(k-1)\frac{\psi'(|x|)}{|x|}=-\alpha\frac{(k-2)}{|x|^k}.
\end{equation} 
Moreover 
$$
\Ppk(D^2u(x))\leq-m^{\frac{k}{k-2}}(|x|)\leq-\frac{m^{\frac{k}{k-2}}(r_1)r_1^k}{|x|^k}\quad\mbox{for}\quad|x|>r_1.
$$
We pick 
$$
\alpha=\min\left\{\frac{m(r_1)-m(r_2)}{\frac{\log r_1}{r_1^{k-2}}-\frac{\log r_2}{r_2^{k-2}}}, \frac{m^{\frac{k}{k-2}}(r_1)r_1^k}{k-2}\right\}\quad\mbox{and}\quad \beta=m(r_2)-\alpha\frac{\log r_2}{r_2^{k-2}}.
$$
In this way $\Ppk(D^2u)\leq\Ppk(D^2\psi)$ in $\Omega$ and $u\geq\psi$ on $\partial\Omega$. By comparison we have
\begin{equation}\label{eq16}
m(2r)\geq\alpha\frac{\log(2r)}{(2r)^{k-2}}+\beta\quad\mbox{for}\quad r_1<2r<r_2.
\end{equation}
Taking into account that $\lim_{r_2\to+\infty}m(r_2)=0$, as a consequence of the bound \eqref{eq13}, and sending $r_2\to+\infty$ in \eqref{eq16} we obtain 
$$
m(2r)\geq\alpha_\infty\frac{\log(2r)}{(2r)^{k-2}}\quad\mbox{for any}\quad r>\frac{r_1}{2},
$$
with $\alpha_\infty=\lim_{r_2\to+\infty}\alpha>0$. This inequality is in contradiction with  \eqref{eq13}.

\noindent\textbf{(\ref{7})} Let $\mu>0$, $\beta\in[\frac{1}{p-1},\frac{k-2}{2})$ and let $u(|x|)=C(\mu+|x|^2)^{-\beta}$, where $C=C(\beta,k,p)$ is a positive constant to be determined. Since $u''(|x|)\geq \frac{u'(|x|)}{|x|}$, $u$ satisfies
\begin{equation*}
\begin{split}
\Ppk(D^2u(|x|))+u^p(|x|)&=u''(|x|)+\frac{k-1}{|x|}u'(|x|)+u^p(|x|)\\
&=C(\mu+|x|^2)^{-(\beta+1)}\left[\frac{4\beta(\beta+1)|x|^2}{\mu+|x|^2}-2\beta k+\frac{C^{p-1}}{(\mu+|x|^2)^{\beta(p-1)-1}}\right]\\
&\leq C(\mu+|x|^2)^{-(\beta+1)}\left[2\beta(2\beta+2-k)+\frac{C^{p-1}}{(\mu+|x|^2)^{\beta(p-1)-1}}\right]\\&\leq0
\end{split}
\end{equation*}
if $C=\mu^{\beta-\frac{1}{p-1}}\left(2\beta(k-2-2\beta)\right)^{\frac{1}{p-1}}$.

\noindent\textbf{(\ref{8})} 
For $p\geq\frac{k+2}{k-2}$ let $u=u(|x|)$ be a radial solution of $\Delta u+u^p=0$ in $\mathbb R^k$, namely 
\begin{equation}\label{eq15-}
\begin{cases}
u''(r)+\frac{k-1}{r}u'(r)=-u^p(r) & \text{for $r>0$}\\
u'(0)=0. &
\end{cases}
\end{equation}
We first note that $u''\geq\frac{u'}{r}$ for any $r>0$. In fact setting $v(r)=u''(r)-\frac{u'(r)}{r}$ and using \eqref{eq15-} 
\begin{equation*}
\begin{split}
v'(r)&=\left(-u^p(r)-k\frac{u'(r)}{r}\right)'\\
&=-pu^{p-1}(r)u'(r)-\frac kr v(r)\\
&\geq-\frac kr v(r)
\end{split}
\end{equation*}
where in the last inequality we used that $u'\leq0$, being $u(|x|)$  in particular a radial superharmonic function. Hence the function $v(r)r^k$ is nondecreasing and $v(r)\geq0$ for any $r\geq0$. \\
Since $u''(r)\geq\frac{u'}{r}$, then for any $x\in\mathbb R^N$
$$
\Ppk(D^2u(|x|))+u^p(|x|)=u''(|x|)+\frac{k-1}{|x|}u'(|x|)+u^p(|x|)=0.
$$
In the case $p=\frac{k+2}{k-2}$ let us explicitly remark that 
$$
u(|x|)=\frac{C}{(\mu+|x|^2)^{\frac{k-2}{2}}},
$$
where $\mu>0$ and $C=(k(k-2)\mu)^{\frac{k-2}{4}}$.

\noindent\textbf{(\ref{9})} Assume by contradiction that such a solution exists. If for some $r>0$ the inequality $u''(r)<\frac{u'}{r}$ holds, then by the equation we have $k\frac{u'(r)}{r}=-u^p(r)$. Reasoning as in the Lemma \ref{rem1} we immediately obtain the contradiction $u''\geq\frac{u'}{r}$. Hence  $u''(r)\geq\frac{u'}{r}$ and 
\begin{equation}\label{eq15} 
u''(r)+\frac{(k-1)u^\prime}{r}=-u^p.
\end{equation}
Now observe that a  positive solution of \eqref{eq15} would be an entire radial solution in $\R^k$ of
$$\Delta u+u^p= 0$$
which do not exists for $p<\frac{k+2}{k-2}$ by the classical result of Gidas and Spruck \cite[Theorem 1.1]{gs1}.
\end{proof}

\section{Solutions in the half space}\label{Half space}

\begin{proof}[\rm\textbf{Proof of Theorem \ref{10}}]
For any positive $\alpha$ consider the initial value problem
\begin{equation*}
\begin{cases}
v''(t)+v^p(t)=0, & t>0\\
v(0)=0\\
v'(0)=\alpha\,.
\end{cases}
\end{equation*}
Such a problem admits classical solutions depending on $p$ and $\alpha$, which are increasing in $[0,\tau]$, where $\tau=\tau(\alpha,p)$ and $v'(\tau)=0$, then decreasing in $[\tau,T]$ with $v(T)=0$ and $T=T(\alpha,p)$. Let us denote by $v_T(t)=v_T(t;\alpha,p)$ the $T$-periodic extension of $v$.\\
Now fix $\alpha>0$ and define $u(x)=v_T(x_N)$ for any $x\in\mathbb R^N_+$. By construction $u=0$ on $\partial\mathbb R^N_+$ and 
$$
\lim_{n\to+\infty}u(x',nT+\tau)=v(\tau)=\max_{[0,T]}v>0
$$
uniformly with respect to $x'\in\mathbb R^{N-1}$. Hence $u$ satisfies \eqref{limsup}. Moreover in the set $\mathbb R^N\backslash\Gamma$, where $\Gamma=\left\{(x',nT)\in\mathbb R^N_+:\,n\in\mathbb N\right\}$,  $u$ is a twice differentiable function such that
$$
D^2u(x)={\rm diag}[0,\ldots,0,v_T''(x_N)],\quad x\in\mathbb R^N\backslash\Gamma.
$$
Since  $v_T''(x_N)<0$ then 
$$
\Pmk(D^2u(x))+u^p(x)=v_T''(x_N)+v_T^p(x_N)=0,\quad x\in\mathbb R^N\backslash\Gamma.
$$
To complete the proof we show that $u$ satisfies  equation \eqref{halfspace} also in $\Gamma$ in the viscosity sense. By periodicity it is sufficient to treat only the case $x_N=T$. First we notice that $u$ is trivially a viscosity subsolution since there are no $C^2$ test function touching $u$ from above at $(x',T)$. Hence, we have only to prove the supersolution property. Let $\varphi$ be a test function touching $u$ from below  at $(x',T)$. Since the restriction of $\varphi$ to the $(N-1)$-dimensional affine subspace orthogonal to the $x_N$-axis through $(x',T)$ has a maximum point in $(x',T)$ equal to zero, i.e.
$$
\varphi(x'+t\xi,T)\leq\varphi(x',T)=u(x',T)=0
$$
for $t$ small enough and $\xi\in\mathbb R^{N-1}$,
 we can use the Courant-Fischer formula, as in the proof of Theorem \ref{P1}-(\ref{2}), to conclude that the first $N-1$ eigenvalues of $D^2\varphi(x',T)$ are nonpositive. Since $k<N$, this implies
$$
\Pmk(D^2\varphi(x',T))\leq0=-\varphi^p(x',T).
$$
It is worth to point out that by scaling the function $u$ by $\tilde u=\gamma^{\frac{2}{p-1}}u(\gamma x)$ we obtain solutions of \eqref{halfspace} satisfying for any $c>0$ the condition $\displaystyle\limsup_{x_N\to+\infty}u(x',x_N)=c$.   

\end{proof}

\begin{lemma}\label{lemmasub}
For $k<N$, the function $\varphi(x)=\frac{x_N}{|x|^k}$ is a classical subsolution of
$$
\Ppk(D^2\varphi(x))=0\quad{\text{in $\mathbb R^N_+$}}
$$
vanishing on $\partial\mathbb R^N_+\backslash\left\{0\right\}$.
\end{lemma}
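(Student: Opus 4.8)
The statement has two parts, and the second is immediate: $\mathbb R^N_+$ does not contain the origin, so $\varphi(x)=x_N|x|^{-k}$ belongs to $C^\infty(\mathbb R^N_+)$, and on $\partial\mathbb R^N_+\setminus\{0\}$ one has $x_N=0$ while $|x|\neq0$, whence $\varphi=0$ there. For the first part, since $\Ppk$ is monotone in the Hessian, being a classical subsolution of $\Ppk(D^2\varphi)=0$ means exactly $\Ppk(D^2\varphi(x))\geq0$ for every $x\in\mathbb R^N_+$, and the plan is to compute this quantity explicitly. I would write $\varphi(x)=x_Nf(|x|)$ with $f(r)=r^{-k}$, so that $f'(r)=-kr^{-k-1}$ and $f''(r)=k(k+1)r^{-k-2}$, and then invoke Lemma \ref{eigenvalues}.

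With the notation of Lemma \ref{eigenvalues} one finds, by a direct substitution,
$$
b=k(k+2)\,x_N|x|^{-k-2},\qquad c=-k|x|^{-k-1},\qquad d=-k\,x_N|x|^{-k-2}.
$$
By that lemma the ordered eigenvalues of $D^2\varphi$ are $\lambda_1=\lambda_-$, $\lambda_2=\cdots=\lambda_{N-1}=d$ and $\lambda_N=\lambda_+$, with $\lambda_\pm$ as displayed there. Since $k<N$, i.e. $N-k+1\geq2$, among the $k$ largest eigenvalues only $\lambda_N$ may differ from $d$, so that
$$
\Ppk(D^2\varphi)=\lambda_N+(k-1)d .
$$

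It remains to plug in. Writing $t=x_N/|x|\in(0,1]$ for $x\in\mathbb R^N_+$, one computes $b+2c\,t=k^2t\,|x|^{-k-1}$ and $(b+2c\,t)^2+4c^2(1-t^2)=k^2|x|^{-2k-2}\big((k^2-4)t^2+4\big)$, hence
$$
\Ppk(D^2\varphi)(x)=kd+\frac{k|x|^{-k-1}}{2}\Big(kt+\sqrt{(k^2-4)t^2+4}\Big)=\frac{k}{2|x|^{k+1}}\Big(\sqrt{(k^2-4)t^2+4}-kt\Big),
$$
using $kd=-k^2t\,|x|^{-k-1}$. Since $(k^2-4)t^2+4-(kt)^2=4(1-t^2)\geq0$ for $0<t\leq1$, we get $\sqrt{(k^2-4)t^2+4}\geq kt$, and therefore $\Ppk(D^2\varphi)(x)\geq0$ on $\mathbb R^N_+$ (with equality exactly on the $x_N$-axis), which is the assertion. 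There is no serious obstacle here: the only delicate points are the sign bookkeeping in Lemma \ref{eigenvalues} (note $c<0$), the observation that precisely one of the top $k$ eigenvalues differs from $d$ because $k\leq N-1$, and the simplification of the radical; everything else is routine.
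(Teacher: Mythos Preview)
Your proof is correct and follows essentially the same route as the paper: both apply Lemma~\ref{eigenvalues} with $f(r)=r^{-k}$, use $k<N$ to write $\Ppk(D^2\varphi)=(k-1)d+\lambda_N$, and verify nonnegativity from the elementary inequality $\sqrt{(k^2-4)t^2+4}\geq kt$ (equivalently, the square root in the paper's display dominates $k^2x_N|x|^{-k-2}$). Your introduction of $t=x_N/|x|$ makes the final simplification a bit more transparent than in the paper, but the argument is the same.
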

\begin{proof}
Using Lemma \ref{eigenvalues} 
\begin{equation*}
\begin{split}
\lambda_1(D^2\varphi(x))&=-k\frac{x_N}{|x|^{k+2}}+\frac{k^2\frac{x_N}{|x|^{k+2}}-\sqrt{\left(k^2\frac{x_N}{|x|^{k+2}}\right)^2+4\frac{k^2}{|x|^{2k+2}}\left(1-\left(\frac{x_N}{|x|}\right)^2\right)}}{2}\\
\lambda_2(D^2\varphi(x))&=\ldots=\lambda_{N-1}(D^2\varphi)=-k\frac{x_N}{|x|^{k+2}}\\
\lambda_N(D^2\varphi(x))&=-k\frac{x_N}{|x|^{k+2}}+\frac{k^2\frac{x_N}{|x|^{k+2}}+\sqrt{\left(k^2\frac{x_N}{|x|^{k+2}}\right)^2+4\frac{k^2}{|x|^{2k+2}}\left(1-\left(\frac{x_N}{|x|}\right)^2\right)}}{2}.
\end{split}
\end{equation*} 
Since $k<N$, then
\begin{equation*}
\begin{split}
\Ppk(D^2\varphi(x))
&=-k^2\frac{x_N}{|x|^{k+2}}+\frac{k^2\frac{x_N}{|x|^{k+2}}+\sqrt{\left(k^2\frac{x_N}{|x|^{k+2}}\right)^2+4\frac{k^2}{|x|^{2k+2}}\left(1-\left(\frac{x_N}{|x|}\right)^2\right)}}{2}\\
&\geq0.
\end{split}
\end{equation*}
\end{proof}

Let us consider now nonnegative viscosity supersolution $u\in LSC(\mathbb R^N_+)$ of
\begin{equation}\label{4eq1}
\Ppk(D^2\varphi(x))=0\quad{\text{in $\mathbb R^N_+$}}.
\end{equation}
By strong minimum principle Theorem \ref{SMP}, either $u\equiv0$ or $u>0$. Let us fix our attention on the case $u>0$. Eventually replacing $u(x)$ by $u(x',x_N+\varepsilon)$ for positive $\varepsilon$, we assume from now on  that $u>0$ in $\overline{\mathbb R^N_+}$. Let
\begin{equation}\label{mu}
\mu(r)=\inf_{\overline B_r\cap\mathbb R^N_+}\frac{u}{x_N}
\end{equation} 
for $r>0$. Since $u>0$ on $\partial\mathbb R^N_+$, the infimum is in fact a minimum and there exists $\hat{x}=\hat{x}(r)$ such that $\mu(r)=\frac{u(\hat{x})}{\hat{x}_N}$. We infer that $\hat{x}\in\partial B_r\cap\mathbb R^N_+$. Indeed let us assume by contradiction that $\hat{x}\in B_r\cap\mathbb R^N_+$. Then  the function $v(x)=u(x)-\mu(r)x_N$ is nonnegative in  $B_r\cap\mathbb R^N_+$, $v(\hat{x})=0$  and  
$$
\Ppk(D^2v)=\Ppk(D^2u)\leq0\quad\text{in}\quad B_r\cap\mathbb R^N_+.
$$
The strong minimum principle yields the contradiction $u\equiv0$. In particular  $\frac{u(x)}{x_N}$ cannot achieves an interior minimum point in $B_r\cap\mathbb R^N_+$, hence $\mu(r)$ is a decreasing function for $r>0$.

Let $0<r_1<r_2$ and let $\varphi(x)=x_N\left(\frac{c_1}{|x|^k}+c_2\right)$  for $x\in (B_{r_2}\backslash B_{r_1})\cap \mathbb R^N_+$. By Lemma \ref{lemmasub},   we have $\Ppk(D^2\varphi)\geq0$ for $c_1\geq0$. Choosing $c_1,c_2$ in such that
$
u\geq\varphi
$
on $\partial((B_{r_2}\backslash B_{r_1})\cap \mathbb R^N_+)$,  we obtain by comparison the following version of Hadamard three circles theorem in the half space, see \cite[Theorem 2.7]{L} for details.

\begin{theorem}\label{Hadamard half space}
Let $u\in LSC(\overline{\mathbb R^N_+})$ be a nonnegative viscosity supersolution of $\Ppk(D^2u)=0$ in $\mathbb R^N_+$. Then for every fixed $0<r_1<r_2$ and any $r_1\leq r\leq r_2$ one has
\begin{equation}
\mu(r)\geq\frac{\mu(r_1)(r^{-k}-r_2^{-k})+\mu(r_2)(r_1^{-k}-r^{-k})}{r_1^{-k}-r_2^{-k}}.
\end{equation} 
In particular the map 
\begin{equation}\label{monotonicity 2}
r\in(0,+\infty)\mapsto\mu(r)r^k\quad\text{is nondecreasing.}
\end{equation}
\end{theorem}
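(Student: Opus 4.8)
The plan is to turn the barrier construction sketched just above the statement into the three explicit inequalities. Recall that the setting is already reduced to the case $u>0$ on $\overline{\mathbb R^N_+}$ (otherwise $u\equiv 0$ and the statement is trivial), and that in this case $\mu$ is a nonincreasing function of $r$. First I would fix $0<r_1<r_2$ and work on the bounded half-annulus $A:=(B_{r_2}\setminus\overline{B_{r_1}})\cap\mathbb R^N_+$ with the radial barrier $\varphi(x)=x_N\big(c_1|x|^{-k}+c_2\big)=c_1\,x_N|x|^{-k}+c_2x_N$. Since $r_1>0$ this $\varphi$ is smooth on $\overline A$; since $D^2(c_2x_N)=0$ and $\Ppk$ is positively $1$-homogeneous, Lemma \ref{lemmasub} gives $\Ppk(D^2\varphi)=c_1\,\Ppk\big(D^2(x_N|x|^{-k})\big)\ge 0$ on $A$ as soon as $c_1\ge 0$, so $\varphi$ is a classical (hence viscosity) subsolution of $\Ppk(D^2\cdot)=0$ in $A$. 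I would then pin down $c_1,c_2$ by requiring $c_1r_i^{-k}+c_2=\mu(r_i)$ for $i=1,2$, which forces
$$c_1=\frac{\mu(r_1)-\mu(r_2)}{r_1^{-k}-r_2^{-k}}\ge 0,$$
the sign being exactly right precisely because $\mu$ is nonincreasing and $r_1<r_2$.

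Next I would check that $u\ge\varphi$ on $\partial A$. The boundary consists of the two spherical caps $\{|x|=r_i\}\cap\overline{\mathbb R^N_+}$ and the flat piece $\{r_1\le|x|\le r_2\}\cap\partial\mathbb R^N_+$; on the flat piece $\varphi=0<u$, and on each cap $\varphi(x)/x_N=c_1r_i^{-k}+c_2=\mu(r_i)\le u(x)/x_N$ directly from the definition \eqref{mu} of $\mu(r_i)$. The comparison principle for the proper, degenerate elliptic operator $\Ppk$ on the bounded domain $A$ (applied to the viscosity supersolution $u$ and the smooth subsolution $\varphi$; cf.\ \cite[Theorem 2.7]{L} and \cite{CIL}) then yields $u\ge\varphi$ on all of $\overline A$. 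To recover the inequality for $\mu$, fix $r_1\le r\le r_2$ and take any $x\in\overline{B_r}\cap\mathbb R^N_+$: if $|x|\ge r_1$ then $x\in\overline A$, so $u(x)/x_N\ge c_1|x|^{-k}+c_2\ge c_1r^{-k}+c_2$ using $c_1\ge0$ and $|x|\le r$; while if $|x|<r_1$ then $u(x)/x_N\ge\mu(r_1)=c_1r_1^{-k}+c_2\ge c_1r^{-k}+c_2$, again by $c_1\ge 0$ and $r_1\le r$. Infimizing over $\overline{B_r}\cap\mathbb R^N_+$ gives $\mu(r)\ge c_1r^{-k}+c_2$, and substituting the chosen values of $c_1,c_2$ is a one-line computation producing exactly the asserted expression. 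For the final ``in particular'' part I would keep $r_1$ and $r$ fixed, drop the nonnegative term $\mu(r_2)(r_1^{-k}-r^{-k})$ from the right-hand side, and let $r_2\to+\infty$, obtaining $\mu(r)\ge\mu(r_1)(r_1/r)^k$, i.e.\ $\mu(r)r^k\ge\mu(r_1)r_1^k$ for all $0<r_1\le r$.

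I do not expect a genuine analytic difficulty here once Lemma \ref{lemmasub} is in hand; the two places that need attention are (i) making sure the radial barrier is actually a subsolution of the \emph{degenerate} operator $\Ppk$ — which is where the homogeneity of $\Ppk$, the fact that the linear term $c_2x_N$ does not affect $\Ppk(D^2\varphi)$, and above all the sign $c_1\ge 0$ extracted from the monotonicity of $\mu$ all come together — and (ii) the fact that $\mu(r)$ is an infimum over the solid half-ball of radius $r$ and not merely over its boundary sphere, so that after comparison on $A$ one still has to control $u/x_N$ on $\overline{B_{r_1}}$; this is taken care of for free by the monotonicity in $|x|$ of the radial barrier, once more thanks to $c_1\ge 0$.
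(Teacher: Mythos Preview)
Your proposal is correct and follows exactly the approach outlined in the paper: the same barrier $\varphi(x)=x_N(c_1|x|^{-k}+c_2)$, the same appeal to Lemma \ref{lemmasub} for the subsolution property when $c_1\ge0$, the same boundary comparison on $(B_{r_2}\setminus\overline{B_{r_1}})\cap\mathbb R^N_+$, and the same passage $r_2\to+\infty$ for the monotonicity of $r^k\mu(r)$; indeed the paper itself refers to \cite[Theorem 2.7]{L} for the details you have written out. Your explicit handling of the inner half-ball $\overline{B_{r_1}}\cap\mathbb R^N_+$ via the monotonicity of the radial factor is a useful clarification that the paper leaves implicit.
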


\begin{proof}[\rm\textbf{Proof of Theorem \ref{Pk+ halfspaces}}]
Observe that 
$$
\Ppk(D^2u)+u^p\leq0\quad\text{in $\R_+^N$}\quad\Longrightarrow\quad \Delta u+u^p\leq0\quad\text{in $\R_+^N$}.
$$
Hence for $p\leq\frac{N+1}{N-1}$ the nonexistence result is proved (see e.g. \cite{KLS}).\\
Fix $p>\frac{N+1}{N-1}$ and assume by contradiction that $u>0$
 in $\overline{\mathbb R^N_+}$ is a viscosity supersolution of 
\begin{equation}\label{eq1half}
\Ppk(D^2u)+u^p=0\quad{\text{in}}\quad\mathbb R^N_+.
\end{equation}
For $r>0$ let 
$$
\varphi(|x-(0',3r)|)=\left(\min_{\overline B_r((0',3r))}u\right)\left[1-\frac{\left[(|x-(0',3r)|-r)^+\right]^3}{r^3}\right].
$$
Following \cite[Theorem 3.1]{L}
$$
\min_{\mathbb R^N_+}(u-\varphi)=(u-\varphi)(\tilde x)\leq0,
$$
where $\tilde x=\tilde x(r)$ and $r\leq|\tilde x-(0',3r)|<2r$. Using $\varphi$ as test function in \eqref{eq1half} we get
\begin{equation*}
\begin{split}
u^p(\tilde x)\leq-\Ppk(D^2\varphi(\tilde x))&=-k\frac{\varphi'(|\tilde x-(0',3r)|)}{|\tilde x-(0',3r)|}\\&=\frac{3k}{r^3}\frac{(|\tilde x-(0',3r)|-r)^2}{|\tilde x-(0',3r)|}\left(\min_{\overline B_r((0',3r))}u\right)\leq \frac{3k2^{k-2}}{r^2}u(\tilde x),
\end{split}
\end{equation*}
where in the last inequality we used the monotonicity property expressed by Proposition \ref{eq7}. In this way
\begin{equation}\label{eq2half}
u^{p-1}(\tilde x)\leq\frac{3k2^{k-2}}{r^2}\quad\forall r>0.
\end{equation}
Since 
$$
u(\tilde x)=\frac{u(\tilde x)}{\tilde x_N}\tilde x_N\geq r\mu(5r),
$$
from \eqref{eq2half} we obtain
\begin{equation}\label{eq3half}
r^k\mu(r)\leq\frac{C}{r^{\frac{p+1}{p-1}-k}},
\end{equation}
where $C=(3k2^{k-2}5^{p+1})^{\frac{1}{p-1}}$, which is in contradiction with the monotonicity property \eqref{monotonicity 2} if $k=1$ or  if $k>1$ and $p<\frac{k+1}{k-1}$. 
\end{proof}


\begin{thebibliography}{99}
\bibitem{AGV} M.E. Amendola, G. Galise, A. Vitolo, \emph{Riesz capacity, maximum principle and removable sets of fully nonlinear second order elliptic operators}, Differential and  Integral equations, Vol. 26, 845-866  (2013).
\bibitem{AS} L. Ambrosio, H. M. Soner, \emph{Level set approach to mean curvature flow in arbitrary codimension},  J. Differential Geom. 43 (1996), 693-737. 
\bibitem{BL} C. Bandle, H.A. Levine, \emph{On the existence and nonexistence of global solutions of reaction--diffusion equations in sectorial domains}, Trans. Amer. Math. Soc. 316 (1989), 595--622.
\bibitem{BCDN} H. Berestycki, I. Capuzzo Dolcetta, L. Nirenberg, \emph{Superlinear indefinite elliptic problems and nonlinear Liouville theorems}, Topol. Methods Nonlinear Anal. 4 (1994), 59--78.
\bibitem{BGI} I. Birindelli, G. Galise, I. Ishii, \emph{A family of degenerate elliptic operators: maximum principle and its consequences}, to appear in  Ann. Inst. H. Poincar\'e Anal. Non Lin\'eaire.
\bibitem{BK}  H. Brezis, S. Kamin, \emph{Sublinear elliptic equations in $\mathbb R^n$}, Manuscripta Math. 74 (1992), no. 1, 87-106.
\bibitem{CLN1} L. Caffarelli, Y. Y. Li, L. Nirenberg, \emph{Some remarks on singular solutions of nonlinear elliptic equations. I}, J. Fixed Point Theory Appl. 5 (2009), 353-395.
\bibitem{CDLV1} I. Capuzzo Dolcetta, F. Leoni, A. Vitolo, \emph{Entire subsolutions of fully nonlinear degenerate elliptic equations}, Bull. Inst. Math. Acad. Sin. (N.S.) 9 (2014), no. 2, 147--161.
\bibitem{CDLV2} I. Capuzzo Dolcetta, F. Leoni, A. Vitolo, \emph{On the inequality $F(x,D^2u)\geq f(u)+g(u)|Du|^q$},  Math. Ann. 365 (2016), no. 1-2, 423--448.
\bibitem{CIL} M.G. Crandall, H. Ishii, P.-L. Lions, \emph{User's guide to viscosity solutions of second order partial differential
equations}, Bull. Amer. Math. Soc. (N.S.) 27 (1) (1992) 1-67.
\bibitem{CL} A. Cutr\`\i, F. Leoni, \emph{On the Liouville property for fully nonlinear equations}, Ann. Inst. H. Poincar\'e Anal. Non Lin\'eaire 17 (2000), no. 2, 219-245. 
\bibitem{GV} G. Galise, A. Vitolo, \emph{Removable singularities for degenerate elliptic Pucci operators}, Adv. Differential Equations 22  no. 1/2, 77-100 (2017). 
\bibitem{gs1} B. Gidas, J. Spruck,  Global and local
behavior of positive solutions of nonlinear elliptic equations {\it Comm. Pure
Appl. Math} {\bf 35}  (1981), 525-598. 
\bibitem{HL1}  F.R. Harvey, H.B. Jr. Lawson,\emph{Dirichlet duality and the nonlinear Dirichlet problem}, Comm. Pure Appl. Math. 62 (2009),  396-443.
\bibitem{HL2}  F.R. Harvey, H.B. Jr. Lawson, \emph{$p$-convexity, $p$-plurisubharmonicity and the Levi problem},  Indiana Univ. Math. J. 62 (2013), 149-169.
\bibitem{KLS}  V. Kondratiev, V. Liskevich, Z. Sobol,  \emph{Positive solutions to semi-linear and quasi-linear elliptic equations on unbounded domains. Handbook of differential equations: stationary partial differential equations}, Vol. VI, Handb. Differ. Equ., Elsevier/North-Holland, Amsterdam, (2008), 177-267.
\bibitem{L} F. Leoni, \emph{Explicit subsolutions and a Liouville theorem for fully nonlinear uniformly elliptic inequalities in halfspaces}, J. Math. Pures Appl. (9) 98 (2012), no. 5, 574-590.
\bibitem{L2} F. Leoni,  \emph{Homogeneous solutions of extremal Pucci's equations in planar cones}, J. Differential Equations 263 (2017), no. 2, 863-879.
\bibitem{Sha} J. P. Sha,  \emph{Handlebodies and $p$-convexity}, J. Differential Geometry 25 (1987), 353-361.
\bibitem{Wu} H. Wu,  \emph{Manifolds of partially positive curvature},  Indiana Univ. Math. J. 36 (1987),  525-548.

\end{thebibliography}
\end{document}